\numberwithin{equation}{section}
\newtheorem{thm}{Theorem}[section]
\newtheorem{lemma}{Lemma}[section]
\newtheorem{remark}{Remark}[section]
\newtheorem*{proof of theorem1.1}{proof of theorem1.1}
\newtheorem*{proof of theorem1.2}{proof of theorem1.2}
\date{}
\begin{document}
\title{\large\textbf {Normalized solutions to  Schr\"{o}dinger systems with  potentials}\thanks{This work is supported by Shanghai Jiao Tong University Scientific and Technological Innovation Funds
partially supported by (NSFC-12031012, NSFC-11831003); and the Institute of Modern Analysis-A Frontier Research Center of Shanghai.}}
\author{Zhaoyang Yun$^{a,}$\thanks{Corresponding author}}
\maketitle
\centerline{\small $^{a}$ School of Mathematical Sciences, Shanghai Jiao Tong University,}
\centerline{\small Shanghai 200240, P. R. China, Email: yunzhaoyang@sjtu.edu.cn.}
\begin{abstract}

In this paper, we study the normalized solutions of the Schr\"{o}dinger system with trapping potentials
\begin{equation}\label{eq:diricichlet}
\begin{cases}
-\Delta u_1+V_1(x)u_1-\lambda_1 u_1=\mu_1 u_1^3+\beta u_1u_2^{2}+\kappa u_2~\hbox{in}~ \mathbb{R}^3,\\
-\Delta u_2+V_2(x)u_2-\lambda_2 u_2=\mu_2 u_2^3+\beta u_1^2u_2+\kappa u_1~\hbox{in}~ \mathbb{R}^3,\\
 u_1\in H^1(\mathbb{R}^3), u_2\in H^1(\mathbb{R}^3),\nonumber
\end{cases}
\end{equation}
under the constraint
\begin{equation}
\int_{\mathbb{R}^3} u_1^2=a_1^2,~\int_{\mathbb{R}^3} u_2^2=a_2^2\nonumber,
\end{equation}
where $\mu_1,\mu_2,a_1,a_2,\beta>0$, $\kappa\in\mathbb{R}$, $V_1(x)$ and $V_2(x)$ are trapping potentials, and $\lambda_1,\lambda_2$ are lagrangian multipliers, this is a typical $L^2$-supercritical case in $\mathbb{R}^3$. We obtain the existence of solutions to this system by minimax theory on the manifold for  $\kappa=0$ and $\kappa\neq 0$ respectively.
\end{abstract}
\textbf{Keywords: }Schr\"{o}dinger systems; Normalized solutions; Minimax principle.\\
\textbf{AMS Subject Classification(2010): }35J15, 35J47, 35J57
\section{Introduction}
We study the existence of solutions  for Schr\"{o}dinger systems with linear and nonlinear couplings
\begin{equation}\label{yun3-10-1}
\begin{cases}
-\Delta u_1+V_1(x)u_1-\lambda_1 u_1=\mu_1 u_1^3+\beta u_1u_2^{2}+\kappa u_2~\hbox{in}~\mathbb{R}^3,\\
-\Delta u_2+V_2(x)u_2-\lambda_2 u_2=\mu_2 u_2^3+\beta u_1^2u_2+\kappa u_1~\hbox{in}~\mathbb{R}^3,\\
 u_1\in H^1(\mathbb{R}^3), u_2\in H^1(\mathbb{R}^3),
\end{cases}
\end{equation}
which arise from Bose-Einstein condensates(see \cite{DKNF}). These systems have been concerned by many mathematicians in
recent years(see \cite{Z-1,Z-2,Z-3,Y-Z}), the existence of normalized solution $(\lambda_1,\lambda_2,u_1,u_2)\in \mathbb{R}^2\times H^1(\mathbb{R}^3)\times H^1(\mathbb{R}^3)$ is one of most important topics, i.e., solutions that satisfy the additional condition
\begin{equation}\label{yun3-10-2}
\int_{\mathbb{R}^3} u_1^2=a_1^2,~\int_{\mathbb{R}^3} u_2^2=a_2^2,
\end{equation}
where $a_1>0$ and $a_2>0$ are perscribed mass, $V_i(x)$, $i=1,2$ satisfy the following conditions $(A_1)$ and $(A_2)$, which are called trapp potentials.

({$A_1$})\ $V_i(x)\in C^1(\mathbb{R}^3,\mathbb{R})$, $\inf_{\mathbb{R}^3}V_i(x)=b_i>0$, $i=1,2$.

($A_2$)\ $\lim\limits_{|x|\rightarrow \infty}V_i(x)=\infty$, $i=1,2$.

When $\kappa=0$, the reason why we consider \eqref{yun3-10-1}-\eqref{yun3-10-2} is to find the standing wave of cubic Schr\"{o}dinger system with perscribed mass. More precisely, if $(\lambda_1,\lambda_2,u_1,u_2)\in \mathbb{R}^2\times H^1(\mathbb{R}^3)\times H^1(\mathbb{R}^3)$ is a solution of \eqref{yun3-10-1}-\eqref{yun3-10-2}, we take
$$\Phi_1(t,x)=e^{-\lambda_1t}u_1(x),~\Phi_2(t,x)=e^{-\lambda_2t}u_2(x),$$
then $(\Phi_1,\Phi_2)$ is the standing wave of following Schr\"{o}dinger system
\begin{equation}\label{yun3-10-3}
\begin{cases}
-\partial_t \Phi_1-\Delta \Phi_1+V_1(x)\Phi_1=\mu_1 \Phi_1^3+\beta \Phi_1\Phi_2^{2}~\hbox{in}~\mathbb{R}\times\mathbb{R}^3,\\
-\partial_t \Phi_2-\Delta \Phi_2+V_2(x)\Phi_2=\mu_2 \Phi_2^3+\beta \Phi_1^2\Phi_2~\hbox{in}~\mathbb{R}\times\mathbb{R}^3,\\
\end{cases}
\end{equation}
and the mass of $(\Phi_1,\Phi_2)$ is perscribed
$$\int_{\mathbb{R}^3}|\Phi_1|^2=a_1^2,~\int_{\mathbb{R}^3}|\Phi_2|^2=a_2^2,$$
along the trajectories of (1.3). When $V_i(x)=0$, $i=1,2$ the existence of solutions for \eqref{yun3-10-1}-\eqref{yun3-10-2} can be found in \cite{B-1}, \cite{B-2}, \cite{B-3}, \cite{B-4}, \cite{B-5}, \cite{B-6}. When $V_i(x)\neq 0$, $i=1,2$ the existence of solutions for \eqref{yun3-10-1}-\eqref{yun3-10-2} can be found in \cite {B-H} by bifurcation theory. Other interesting results for normalized solutions of Schrödinger equations or systems can be found in  \cite{G-J1}, \cite{G-Z1}, \cite{N-T}.
\par
When $\kappa\neq 0$, $\lambda_1$ and $\lambda_2$ are fixed, \eqref{yun3-10-1} is an important model for Schr\"{o}dinger systems, which was studied by many mathmaticians in \cite{Z-1}, \cite{Z-2}, \cite{Z-3}, \cite{Z-4}. When $V_i(x)=0$, $i=1, 2$ the existence of normalized solutions of  \eqref{yun3-10-1}-\eqref{yun3-10-2} can be found in \cite{Y-Z}. When $V_i(x)\neq 0$, $i=1, 2$ to the best of our knowledge there is no results of systems \eqref{yun3-10-1}-\eqref{yun3-10-2}. Inspired by these papers we are going to study the existence of solutions $(\lambda_1,\lambda_2,u_1,u_2)\in \mathbb{R}^2\times H^1(\mathbb{R}^3)\times H^1(\mathbb{R}^3)$ for system  \eqref{yun3-10-1}-\eqref{yun3-10-2}, where $\lambda_1$ and $\lambda_2$ are Lagrangian multipliers.
\par
 The weak solution of \eqref{yun3-10-1}-\eqref{yun3-10-2} is the critical point of the following functional
\begin{equation}\label{yun3-10-4}
\begin{split}
J(u_1,u_2)=&\frac{1}{2}\int_{\mathbb{R}^3}|\nabla u_1|^2+V_1(x)u_1^2+|\nabla u_2|^2+V_2(x)u_2^2\\
&-\frac{1}{4}\int_{\mathbb{R}^3}\mu_1 u_1^4+\mu_2 u_2^4+2\beta u_1^2u_2^2-\kappa\int_{\mathbb{R}^3}u_1u_2,
\end{split}
\end{equation}
on the manifold $S_1\times S_2$ of space $H_1\times H_2$, by regularity theory of elliptic equations, weak solutions of \eqref{yun3-10-1}-\eqref{yun3-10-2} are classical, where
$$S_1\times S_2:=\{(u_1,u_2)\in H_1\times H_2: |u_1|_2=a_1,|u_2|_2=a_2\},$$
 $H_1$ and $H_2$ are defined by
$$H_i:=\{u\in H^1(\mathbb{R}^3): \int_{\mathbb{R}^3}|\nabla u|^2+V_i(x)u^2<\infty\},~i=1,2,$$
and the $L^p$ norm denotes by
$$|u|_p:=(\int_{\mathbb{R}^3}|u|^p)^{\frac{1}{p}},~1\leq p\leq \infty.$$
Now we  state more basic assumptions on the potentials.

$(A_3)$\ There exists $D\in C(\mathbb{R},\mathbb{R}^+)$ such that
$$V_i(\alpha x)\leq D(\alpha)V_i(x),~\forall \alpha\in\mathbb{R},\ i=1,2,$$
where $\mathbb{R}^+:=[0,\infty).$

$(A_4)$\ There exists a constant $C_1>0$ such that
$$2V_i(x)+\nabla V_i(x)\cdot x\geq -C_1,\ i=1,2.$$

\begin{remark}
There are many functions which satisfy $(A_1)$, $(A_2)$, $(A_3)$ and $(A_4)$. For example
\par
(a) $V_i(x)=|x|^{\theta_{i}}+\delta_{i},~\theta_i>1,~\delta_i>0,~i=1,2.$
\par
(b) $V_i(x)=\ln (|x|^{\theta_{i}}+\eta_{i}),~\theta_i>1,~\eta_i>1,~i=1,2.$
\end{remark}
In order to find the weak solutions of \eqref{yun3-10-1}-\eqref{yun3-10-2} we need to overcome some difficulities, because the functional $J$ is unbounded from below on ${S_1\times S_2}$, we can not use Ekland variational principle to $J$ on $S_1\times S_2$ directly; and because $\lambda_1$ and $\lambda_2$ are unknown parts, we can not use  Nehari manifold method to find the solution.  To this end, we try to construct a minimax structure of $J$ on the manifold $S_1\times S_2$, by the minimax theory on the Finsler manifold which was introduced in \cite{GH},  we obtain a critical point of $J$ on $S_1\times S_2$, which is a weak solution of \eqref{yun3-10-1}-\eqref{yun3-10-2}.
\par
Note that $(A_1)$ and $(A_2)$  imply that the embedding $H_i\hookrightarrow L^p(\mathbb{R}^3)$ is compact when $2\leq p<2^*=6$ (the dimension is $N=3,2^*=\frac{2N}{N-2}$). Condition $(A_3)$ and $(A_4)$ are in order to make sure that the Palais-Smale sequence of $J|_{S_1\times S_2}$ we found in the following is bounded.
\par
Let $C_{a_1,a_2}:=\max\{(\mu_1+\beta)a_1G_4^4,(\mu_2+\beta)a_2G_4^4\}$, where $G_4$ denotes the best constant of Gagliardo-Nirenberg inequality which will be defined in Lemma \ref{le3-30-2}, $\lambda_{V_i}$, $i=1,2$ are defined by
$$\lambda_{V_i}:=\inf_{\phi\in H_i} \{ \int_{\mathbb{R}^3}|\nabla \phi|^2+V_i(x)\phi^2:\int_{\mathbb{R}^3}\phi^2=1\}.$$
\begin{remark}\label{re3-30-1}
By {\cite{N-T}} we know $\lambda_{V_i}>0$, $i=1,2$, are achieved by nonnegative functions which denote by $u_{V_i}$.
\end{remark}
\par
The main results  obtained by minimax methods read as follows.
\begin{thm}\label{th3-30-1}
Assume $(A_1)$, $(A_2)$, $(A_3)$, $(A_4)$ hold and $\kappa=0$, if $a_1$, $a_2$ satisfy that
\begin{equation}\label{6-1-2}
(\lambda_{V_1}a_1^2+\lambda_{V_2}a_2^2)C_{a_1,a_2}^2<\frac{16}{27},
\end{equation}
 then \eqref{yun3-10-1}-\eqref{yun3-10-2} has a solution $(\lambda_{1},\lambda_{2},u_{1,0},u_{2,0})$, and $u_{1,0}\neq 0$ and $u_{2,0}\neq 0$ are both nonnegative.
\end{thm}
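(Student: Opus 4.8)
The plan is to obtain $(\lambda_1,\lambda_2,u_{1,0},u_{2,0})$ as a mountain--pass critical point of $J$ on the Finsler manifold $S_1\times S_2$, following the by-now-standard scaling strategy for $L^2$-supercritical normalized problems together with the minimax principle of \cite{GH}. Throughout I will write $r=r(u_1,u_2):=\big(|\nabla u_1|_2^2+|\nabla u_2|_2^2\big)^{1/2}$, $T(u_1,u_2):=\frac12\int_{\mathbb R^3}|\nabla u_1|^2+V_1u_1^2+|\nabla u_2|^2+V_2u_2^2$, $Q(u_1,u_2):=\int_{\mathbb R^3}\mu_1u_1^4+\mu_2u_2^4+2\beta u_1^2u_2^2$, $\Lambda:=\lambda_{V_1}a_1^2+\lambda_{V_2}a_2^2$, $C:=C_{a_1,a_2}$ and $\|u\|_{H_i}^2:=\int_{\mathbb R^3}|\nabla u|^2+V_i(x)u^2$, and I will use the mass-preserving dilations $s\star u(x):=e^{3s/2}u(e^sx)$, which by $(A_3)$ map each $H_i$ into itself.

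\textbf{Step 1: mountain--pass geometry of $J$ on $S_1\times S_2$.} Using the Gagliardo--Nirenberg inequality $|u|_4^4\le G_4^4|\nabla u|_2^3|u|_2$ from Lemma \ref{le3-30-2}, together with $2u_1^2u_2^2\le u_1^4+u_2^4$ and the elementary inequality $a^3+b^3\le(a^2+b^2)^{3/2}$, one gets on $S_1\times S_2$ the bound $\frac14 Q(u_1,u_2)\le\frac14 C\big(|\nabla u_1|_2^3+|\nabla u_2|_2^3\big)\le\frac14 C r^3$, while $T(u_1,u_2)\ge\frac12 r^2$ and, by the definition of $\lambda_{V_i}$, also $T(u_1,u_2)\ge\frac12\Lambda$; hence $J(u_1,u_2)\ge\frac12\max\{r^2,\Lambda\}-\frac14 C r^3$ on $S_1\times S_2$. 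The scalar function $r\mapsto\frac12 r^2-\frac14 C r^3$ attains on $[0,\infty)$ its maximum $\frac{8}{27C^2}$ at $r^*:=\frac{4}{3C}$, and by \eqref{6-1-2} we have $\Lambda<\frac{16}{27C^2}<(r^*)^2$ and $\frac12\Lambda<\frac{8}{27C^2}$; therefore $J\ge\frac{8}{27C^2}$ on the set $\Sigma^*:=\{(u_1,u_2)\in S_1\times S_2:\ r(u_1,u_2)=r^*\}$. With $u_{V_i}$ from Remark \ref{re3-30-1}, the base point $e_0:=(a_1u_{V_1},a_2u_{V_2})\in S_1\times S_2$ satisfies $r(e_0)^2\le\lambda_{V_1}a_1^2+\lambda_{V_2}a_2^2=\Lambda<(r^*)^2$ and $J(e_0)=\frac12\Lambda-\frac14 Q(e_0)<\frac12\Lambda<\frac{8}{27C^2}$. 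Finally, by $(A_3)$ the term $\frac12\int V_i(e^{-s}x)(a_iu_{V_i})^2$ stays bounded as $s\to+\infty$, while $-\frac14 e^{3s}Q(e_0)$ dominates $\frac12 e^{2s}r(e_0)^2$, so $J(s\star a_1u_{V_1},s\star a_2u_{V_2})\to-\infty$; fix $s_0$ so large that $e_1:=(s_0\star a_1u_{V_1},s_0\star a_2u_{V_2})$ has $r(e_1)^2=e^{2s_0}r(e_0)^2>(r^*)^2$ and $J(e_1)<0$. The dilation path $s\in[0,s_0]\mapsto(s\star a_1u_{V_1},s\star a_2u_{V_2})$ shows the class $\Gamma:=\{\gamma\in C([0,1],S_1\times S_2):\gamma(0)=e_0,\ \gamma(1)=e_1\}$ is nonempty and $c:=\inf_{\gamma\in\Gamma}\max_{[0,1]}J(\gamma(\cdot))<\infty$; since $r$ is continuous with $r(e_0)<r^*<r(e_1)$, every $\gamma\in\Gamma$ meets $\Sigma^*$, whence $c\ge\frac{8}{27C^2}>\max\{J(e_0),J(e_1)\}$.

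\textbf{Step 2: a bounded Palais--Smale sequence (the main obstacle).} As $a_i>0$, $S_1\times S_2$ is a $C^1$ submanifold of the Hilbert space $H_1\times H_2$ and $J\in C^1$, so the minimax theory of \cite{GH} yields a Palais--Smale sequence $(u_1^n,u_2^n)\subset S_1\times S_2$ for $J|_{S_1\times S_2}$ at level $c$: $J(u_1^n,u_2^n)\to c$ and, by the Lagrange multiplier rule, there are $\lambda_1^n,\lambda_2^n\in\mathbb R$ for which $(u_1^n,u_2^n)$ solves \eqref{yun3-10-1} with $\kappa=0$ and with $\lambda_i$ replaced by $\lambda_i^n$ up to an error tending to $0$ in $H_1'\times H_2'$. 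Since $J(|u_1|,|u_2|)\le J(u_1,u_2)$ when $\kappa=0$, the class $\Gamma$, hence the sequence, may be taken with $u_i^n\ge0$. The difficulty is that $J$ is unbounded below on $S_1\times S_2$, so boundedness of $(u_1^n,u_2^n)$ is not free; to obtain it I would run the same minimax for the lifted functional $\widetilde J(s,u_1,u_2):=J(s\star u_1,s\star u_2)$ on $\mathbb R\times S_1\times S_2$ --- whose mountain--pass value, via the bijection $(s,u)\mapsto s\star u$ between path classes, is again $c$ --- and, applying the dilation, produce a Palais--Smale sequence for $J|_{S_1\times S_2}$ at level $c$ satisfying in addition the approximate Pohozaev identity
\[
|\nabla u_1^n|_2^2+|\nabla u_2^n|_2^2-\tfrac34 Q(u_1^n,u_2^n)-\tfrac12\int_{\mathbb R^3}\big[(x\cdot\nabla V_1)(u_1^n)^2+(x\cdot\nabla V_2)(u_2^n)^2\big]=o(1).
\]
Eliminating $Q(u_1^n,u_2^n)$ between this identity and $J(u_1^n,u_2^n)=c+o(1)$ and invoking $(A_4)$ in the form $\int(2V_i+x\cdot\nabla V_i)(u_i^n)^2\ge-C_1a_i^2$ gives
\[
\tfrac16\big(|\nabla u_1^n|_2^2+|\nabla u_2^n|_2^2\big)+\tfrac16\sum_{i=1}^{2}\int_{\mathbb R^3}V_i(u_i^n)^2\le c+\tfrac16 C_1(a_1^2+a_2^2)+o(1),
\]
so $(u_1^n,u_2^n)$ is bounded in $H_1\times H_2$, and then testing the Palais--Smale relation with $(u_i^n,0)$ shows $\lambda_i^n$ is bounded. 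I expect this to be the hardest point --- and it is exactly here that $(A_3)$ (to dilate inside $H_i$ and control the potential term under scaling) and $(A_4)$ (to dominate $x\cdot\nabla V_i$ by $V_i$) enter --- together with the routine check that the deformation machinery of \cite{GH} applies to the product constraint and to $\widetilde J$.

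\textbf{Step 3: compactness and conclusion.} By $(A_1)$ and $(A_2)$ each embedding $H_i\hookrightarrow L^p(\mathbb R^3)$ is compact for $2\le p<6$, so along a subsequence $u_i^n\rightharpoonup u_{i,0}$ in $H_i$, $u_i^n\to u_{i,0}$ in $L^p$ for $2\le p<6$, and $\lambda_i^n\to\lambda_i$. In particular $|u_{i,0}|_2=a_i$, so $(u_{1,0},u_{2,0})\in S_1\times S_2$ and, automatically, $u_{1,0}\neq0$ and $u_{2,0}\neq0$; passing to the weak limit in the weak formulation (the quartic terms converge by the $L^4$-convergence) shows $(\lambda_1,\lambda_2,u_{1,0},u_{2,0})$ solves \eqref{yun3-10-1}--\eqref{yun3-10-2}. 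Writing the Palais--Smale relation tested against $u_i^n$ as $\|u_i^n\|_{H_i}^2=\lambda_i^na_i^2+\int_{\mathbb R^3}\mu_i(u_i^n)^4+\beta(u_1^n)^2(u_2^n)^2+o(1)$ and the limit equation tested against $u_{i,0}$ in the analogous way, the $L^4$-convergence yields $\|u_i^n\|_{H_i}^2\to\|u_{i,0}\|_{H_i}^2$; since also $u_i^n\rightharpoonup u_{i,0}$ weakly in the Hilbert space $(H_i,\|\cdot\|_{H_i})$, we get $u_i^n\to u_{i,0}$ strongly in $H_i$, hence $J(u_{1,0},u_{2,0})=c$. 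Finally $u_{i,0}\ge0$ because $u_i^n\ge0$; by elliptic regularity $u_{i,0}\in C^2(\mathbb R^3)$, and since $\kappa=0$ the $i$-th equation gives $-\Delta u_{i,0}+(V_i-\lambda_i)u_{i,0}\ge0$ with $u_{i,0}\ge0$, $u_{i,0}\not\equiv0$, so the strong maximum principle forces $u_{i,0}>0$. This completes the proof.
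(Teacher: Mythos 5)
Your proposal is correct and follows essentially the same route as the paper: the mountain--pass geometry on $S_1\times S_2$ via the Gagliardo--Nirenberg bound (your sphere $\{r=r^*\}$ plays exactly the role of the paper's $B_K$), the lifted functional $\widetilde J(s,u_1,u_2)=J(s\star u_1,s\star u_2)$ to produce a Palais--Smale sequence satisfying the approximate Pohozaev identity, boundedness from that identity combined with $(A_4)$, and the compact embedding $H_i\hookrightarrow L^p$ to pass to the limit. The only cosmetic caveat is that the minimax theorem of \cite{GH} gives a Palais--Smale sequence only \emph{close} to the nonnegative paths (so $\,(u_i^n)^-\to 0$ in $H_i$ rather than $u_i^n\ge 0$ exactly), which still yields $u_{i,0}\ge 0$ as in the paper.
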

Based on the proof of Theorem \ref{th3-30-1}, we have following existence theorems when $\kappa\neq 0$ by constructing the minimax structure on $S_1\times S_2$.
\begin{thm}\label{th3-30-2}
Assume $(A_1)$, $(A_2)$, $(A_3)$, $(A_4)$ hold and $0<\kappa<\frac{4}{27C^2_{a_1,a_2}a_1a_2}$, if $a_1$, $a_2$ satisfy that
\begin{equation}\label{6-1-1}
(\lambda_{V_1}a_1^2+\lambda_{V_2}a_2^2)C_{a_1,a_2}^2<\frac{8^2}{9^3},
\end{equation}then \eqref{yun3-10-1}-\eqref{yun3-10-2} has a solution $(\lambda_{1},\lambda_{2},u_{1,0},u_{2,0})$, and $u_{1,0}\neq 0$ and $u_{2,0}\neq 0$ are both nonnegative.
\end{thm}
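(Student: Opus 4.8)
The plan is to adapt the minimax scheme used for Theorem~\ref{th3-30-1} to the case $\kappa\neq0$. First I would set up the geometry of $J$ on $S_1\times S_2$. For fixed $(u_1,u_2)\in S_1\times S_2$ consider the $L^2$-preserving dilation $(s\star u_i)(x):=s^{3/2}u_i(sx)$; then $|s\star u_i|_2=a_i$ and one computes that $\int|\nabla(s\star u_i)|^2=s^2\int|\nabla u_i|^2$, $\int V_i(x)(s\star u_i)^2=\int V_i(x/s)u_i^2\le D(1/s)\int V_i(x)u_i^2$ by $(A_3)$, $\int(s\star u_i)^4=s^3\int u_i^4$, and $\int(s\star u_1)^2(s\star u_2)^2=s^3\int u_1^2u_2^2$, while the linear coupling term scales as $\kappa\int(s\star u_1)(s\star u_2)=\kappa\int u_1u_2$, which is $s$-independent. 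Hence $J(s\star u_1,s\star u_2)\to-\infty$ as $s\to\infty$ (the $-s^3$ quartic terms dominate) and, using the Gagliardo--Nirenberg inequality of Lemma~\ref{le3-30-2} together with the bound on $\kappa$ and \eqref{6-1-1}, one shows $J$ stays bounded below and in fact $J>0$ on a small sphere in the $s\to0^+$ regime. The Cauchy--Schwarz bound $|\kappa\int u_1u_2|\le|\kappa|a_1a_2$ is what forces the hypothesis $0<\kappa<\frac{4}{27C_{a_1,a_2}^2a_1a_2}$ and the sharper constant $8^2/9^3$ in \eqref{6-1-1} compared with $16/27$ in Theorem~\ref{th3-30-1}.

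Next I would build the minimax class. Using the functions $u_{V_1},u_{V_2}$ from Remark~\ref{re3-30-1} normalized so that $a_1u_{V_1}\in S_1$, $a_2u_{V_2}\in S_2$, define a reference point and take the family of paths $\gamma:[0,1]\to S_1\times S_2$ joining a ``small-$s$'' configuration where $J$ is positive to a ``large-$s$'' configuration where $J<0$; set
\begin{equation}\label{eq:minimax-kappa}
c_{a_1,a_2}:=\inf_{\gamma\in\Gamma}\max_{t\in[0,1]}J(\gamma(t)),
\end{equation}
where $\Gamma$ is this path family. The mountain-pass geometry just established gives $c_{a_1,a_2}>0$, and by the minimax principle on the Finsler manifold $S_1\times S_2$ from \cite{GH} there is a Palais--Smale sequence $(u_{1,n},u_{2,n})$ for $J|_{S_1\times S_2}$ at level $c_{a_1,a_2}$. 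I would then invoke the boundedness argument: testing the PS condition against the dilation generator and using $(A_4)$ (which controls $2V_i+\nabla V_i\cdot x$ from below) shows the sequence is bounded in $H_1\times H_2$; here the linear term contributes $\kappa\int u_{1,n}u_{2,n}$, again bounded by $|\kappa|a_1a_2$, so it does not disturb the estimate. By the compact embeddings $H_i\hookrightarrow L^p(\mathbb{R}^3)$, $2\le p<6$, one extracts a strongly convergent subsequence in $L^4$ and, upgrading, a limit $(u_{1,0},u_{2,0})\in S_1\times S_2$ that is a critical point of $J|_{S_1\times S_2}$; the Lagrange multiplier rule produces $\lambda_1,\lambda_2$.

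Finally I would establish positivity and nontriviality. Replacing $(u_{i,n})$ by $(|u_{i,n}|)$ in the minimax (the functional does not increase and the constraint is unchanged, noting $\kappa>0$ makes the linear term behave well under this replacement since $\int|u_1||u_2|\ge\int u_1u_2$), the limit may be taken nonnegative; the strong maximum principle applied to each equation of \eqref{yun3-10-1} then gives $u_{1,0},u_{2,0}>0$ wherever they are not identically zero. To exclude a semi-trivial limit such as $u_{2,0}\equiv0$, I would argue by energy comparison: if one component vanished, the first equation would reduce to a scalar problem whose critical level can be computed and shown, using \eqref{6-1-1} and the presence of $\beta>0$ and $\kappa>0$ coupling in the path construction, to be strictly larger than $c_{a_1,a_2}$, a contradiction. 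I expect this last nontriviality/no-vanishing step to be the main obstacle: the linear coupling term $\kappa\int u_1u_2$ is sign-indefinite a priori and couples the two masses in a way that is delicate to control in the limit, so pinning down the strict energy inequality that forbids a semi-trivial solution — and doing so within the precise range of $\kappa$ and the mass constraint \eqref{6-1-1} — is where the real work lies; the boundedness of the PS sequence and the passage to the limit, by contrast, are essentially the same as in Theorem~\ref{th3-30-1}.
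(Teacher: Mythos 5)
Your overall scheme coincides with the paper's: keep the mountain-pass geometry of Theorem \ref{th3-30-1} on the sets $A_K$, $B_K$, absorb the linear coupling via Cauchy--Schwarz ($|\kappa\int u_1u_2|\le|\kappa|a_1a_2$, which is indeed the source of the restriction $0<\kappa<\frac{4}{27C^2_{a_1,a_2}a_1a_2}$ and of the sharper constant in \eqref{6-1-1}), note that the dilation leaves $\kappa\int u_1u_2$ invariant so $J(s\star u_1,s\star u_2)\to-\infty$, get a Palais--Smale sequence from the Ghoussoub minimax principle on the augmented functional, obtain boundedness from the $s$-derivative identity together with $(A_4)$, pass to the limit by the compact embeddings, and use $J(|u_1|,|u_2|)\le J(u_1,u_2)$ (valid since $\kappa>0$) to arrange nonnegativity. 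All of that matches Lemma \ref{le4-9-1} and the paper's proof of Theorem \ref{th3-30-2}.

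The one place you diverge is the nontriviality of the limit, and there you have misdiagnosed the problem. You propose to rule out a semi-trivial limit $u_{2,0}\equiv 0$ by an energy comparison with a scalar critical level, call this ``where the real work lies,'' and leave the strict inequality unproved. In the present setting no such argument is needed: by $(A_1)$--$(A_2)$ the embedding $H_i\hookrightarrow L^2(\mathbb{R}^3)$ is compact (Lemma \ref{le3-30-1}), so the bounded PS sequence converges \emph{strongly} in $L^2\times L^2$ and the limit satisfies $|u_{1,0}|_2=a_1>0$, $|u_{2,0}|_2=a_2>0$; the constraint does not leak and both components are automatically nonzero. The semi-trivial-limit issue you are importing is a genuine difficulty only in the potential-free case $V_i\equiv 0$, where mass can escape to infinity. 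As written, your final step is a gap (an asserted but unproved strict energy inequality), but it is a gap in an argument you do not need; replacing it with the observation above completes the proof along exactly the paper's lines.
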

Similarly, for $\kappa<0$ we have following theorem.
\begin{thm}\label{th3-30-3}
Assume $(A_1)$, $(A_2)$, $(A_3)$, $(A_4)$ hold and $-\frac{4}{27C^2_{a_1,a_2}a_1a_2}<\kappa<0$, if $a_1$, $a_2$ satisfy that
\begin{equation}\label{6-1-3}
(\lambda_{V_1}a_1^2+\lambda_{V_2}a_2^2)C_{a_1,a_2}^2<\frac{8^2}{9^3},
\end{equation}then \eqref{yun3-10-1}-\eqref{yun3-10-2} has a solution $(\lambda_{1},\lambda_{2},u_{1,0},u_{2,0})$, and $u_{1,0}\geq 0$, $u_{2,0}\leq 0$ or $u_{1,0}\leq 0$, $u_{2,0}\geq 0$.
\end{thm}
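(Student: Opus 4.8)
The plan is to deduce Theorem \ref{th3-30-3} from Theorem \ref{th3-30-2} by the sign change $u_2\mapsto -u_2$. Put $v_2:=-u_2$ and $\tilde\kappa:=-\kappa>0$. Substituting $u_2=-v_2$ into the second equation of \eqref{yun3-10-1} and multiplying through by $-1$ turns its right-hand side $\mu_2u_2^3+\beta u_1^2u_2+\kappa u_1$ into $\mu_2v_2^3+\beta u_1^2v_2+\tilde\kappa u_1$, while the coupling term $\kappa u_2$ in the first equation becomes $\tilde\kappa v_2$; hence $(\lambda_1,\lambda_2,u_1,u_2)$ solves \eqref{yun3-10-1}--\eqref{yun3-10-2} with coupling constant $\kappa<0$ if and only if $(\lambda_1,\lambda_2,u_1,v_2)$ solves the same system with $\kappa$ replaced by $\tilde\kappa>0$. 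The constraint \eqref{yun3-10-2} is untouched since $|v_2|_2=|u_2|_2=a_2$, and at the level of the energy this is just the identity $J(u_1,-v_2)=\tilde J(u_1,v_2)$, where $\tilde J$ is the functional \eqref{yun3-10-4} with $\tilde\kappa$ in place of $\kappa$, so critical points of $J|_{S_1\times S_2}$ and of $\tilde J|_{S_1\times S_2}$ correspond bijectively.

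With this dictionary the hypotheses match exactly: the assumption $-\frac{4}{27C^2_{a_1,a_2}a_1a_2}<\kappa<0$ is the same as $0<\tilde\kappa<\frac{4}{27C^2_{a_1,a_2}a_1a_2}$, and \eqref{6-1-3} coincides verbatim with \eqref{6-1-1}. Applying Theorem \ref{th3-30-2} to $\tilde J$ yields a solution $(\lambda_1,\lambda_2,u_{1,0},v_{2,0})$ with $u_{1,0}\neq 0$, $v_{2,0}\neq 0$, both nonnegative; setting $u_{2,0}:=-v_{2,0}$ gives the desired solution of the $\kappa<0$ system with $u_{1,0}\geq 0$, $u_{2,0}\leq 0$. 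The other alternative $u_{1,0}\leq 0$, $u_{2,0}\geq 0$ is obtained identically after instead replacing $u_1$ by $-u_1$. Along this route nothing new has to be proved beyond observing that every hypothesis is symmetric under $\kappa\mapsto-\kappa$ combined with a sign flip of one component.

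If instead one wants a self-contained argument, one reruns the minimax construction used for Theorem \ref{th3-30-2} directly on $J|_{S_1\times S_2}$ with $\kappa<0$, but over pairs with $u_1\ge 0\ge u_2$: for such pairs $-\kappa\int_{\mathbb{R}^3}u_1u_2=|\kappa|\int_{\mathbb{R}^3}u_1(-u_2)\ge 0$ plays exactly the favourable role that $-\kappa\int_{\mathbb{R}^3}u_1u_2\ge 0$ plays for nonnegative pairs when $\kappa>0$. The mountain-pass/linking geometry on the manifold, the scaling estimates that keep the minimax level below the compactness threshold dictated by \eqref{6-1-3}, the boundedness of Palais--Smale sequences coming from $(A_3)$--$(A_4)$, and the compactness of $H_i\hookrightarrow L^p(\mathbb{R}^3)$ for $2\le p<6$ all transfer with only notational changes; nontriviality of both components again follows from the linear coupling, since $u_{2,0}\equiv 0$ would force $\kappa u_{1,0}\equiv 0$, hence $u_{1,0}\equiv 0$, against the positivity of the minimax level. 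In either route the only delicate point is the one already handled in the proof of Theorem \ref{th3-30-2}: the energy estimate forcing the minimax value strictly below the level governed by the Gagliardo--Nirenberg constant $G_4$ and $\lambda_{V_1},\lambda_{V_2}$.
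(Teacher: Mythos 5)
Your proposal is correct, and your first route is genuinely different from (and cleaner than) what the paper does. The paper's proof of Theorem \ref{th3-30-3} is a one-line instruction to rerun the proof of Theorem \ref{th3-30-2}, using $J(|u_1|,-|u_2|)\leq J(u_1,u_2)$ and $J(-|u_1|,|u_2|)\leq J(u_1,u_2)$ for $\kappa<0$ to select sign-constrained paths $\gamma_{1,n}\geq 0$, $\gamma_{2,n}\leq 0$ in the minimax class, exactly as sketched parenthetically in Section 2 --- this is your second, self-contained route. Your first route instead observes that the odd reflection $u_2\mapsto -u_2$ is an isometry of $S_1\times S_2$ conjugating the system and the functional at coupling $\kappa<0$ to those at coupling $\tilde\kappa=-\kappa>0$ (all terms of $J$ other than $-\kappa\int u_1u_2$ are even in $u_2$, and the hypothesis windows for $\kappa$ and the mass condition \eqref{6-1-3} versus \eqref{6-1-1} match verbatim), so Theorem \ref{th3-30-3} is a formal corollary of Theorem \ref{th3-30-2} with no analysis to redo. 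What the reduction buys is that it isolates the symmetry once and for all and inherits every property proved in the $\kappa>0$ case automatically; what the paper's (and your second) route buys is independence from Theorem \ref{th3-30-2} as a black box and makes visible where the sign condition on the limit actually comes from. One cosmetic remark: your nontriviality argument via ``$u_{2,0}\equiv 0$ forces $u_{1,0}\equiv 0$'' is not needed, since the compact embedding $H_i\hookrightarrow L^2(\mathbb{R}^3)$ already gives $|u_{i,0}|_2=a_i>0$ in the limit.
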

Moreover, by modifying the minimax structure in the proof of Theorem \ref{th3-30-2} and Theorem \ref{th3-30-3}, we can get some other existence results for $(1.1)$-$(1.2)$ when $\kappa\neq 0$, and $\kappa$ is independent of $a_1$ and $a_2$.
\begin{thm}\label{th3-30-4}
Assume $(A_1)$, $(A_2)$, $(A_3)$, $(A_4)$ hold and $0<\kappa<\sqrt{\inf_{\mathbb{R}^3}V_1\cdot\inf_{\mathbb{R}^3}V_2}$, if $a_1$, $a_2$ satisfy that
\begin{equation}\label{6-1-4}
(\lambda_{V_1}a_1^2+\lambda_{V_2}a_2^2)C_{a_1,a_2}^2<\frac{16}{27},
\end{equation}then \eqref{yun3-10-1}-\eqref{yun3-10-2} has a solution $(\lambda_1,\lambda_2,u_{1,0},u_{2,0})$, and $u_{1,0}\neq 0$ and $u_{2,0}\neq 0$ are both nonnegative.
\end{thm}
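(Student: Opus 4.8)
The plan is to run the scheme used for Theorem~\ref{th3-30-1} (the case $\kappa=0$) essentially verbatim, the point being that the hypothesis $\kappa<\sqrt{\inf_{\mathbb{R}^3}V_1\cdot\inf_{\mathbb{R}^3}V_2}$ lets the linear coupling $-\kappa\int_{\mathbb{R}^3}u_1u_2$ be absorbed into the quadratic part of $J$; this is why the mass condition \eqref{6-1-4} may be kept identical to \eqref{6-1-2} rather than tightened as in Theorem~\ref{th3-30-2}. Write $b_i:=\inf_{\mathbb{R}^3}V_i>0$ and $\theta:=1-\kappa/\sqrt{b_1b_2}\in(0,1)$. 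Since $(1-\theta)^2V_1(x)V_2(x)\ge(1-\theta)^2b_1b_2=\kappa^2$, the pointwise quadratic form $(1-\theta)V_1u_1^2+(1-\theta)V_2u_2^2-2\kappa u_1u_2$ is nonnegative, whence
\[
V_1(x)u_1^2+V_2(x)u_2^2-2\kappa u_1u_2\ \ge\ \theta\bigl(V_1(x)u_1^2+V_2(x)u_2^2\bigr)\ \ge\ 0 .
\]
In particular $\tfrac12\int_{\mathbb{R}^3}V_1u_1^2+V_2u_2^2-\kappa\int_{\mathbb{R}^3}u_1u_2\ge0$, so $J$ keeps the structure ``nonnegative quadratic part $-$ $L^2$-supercritical quartic part'' of the uncoupled case, and this coercivity lets a bound on the quadratic-plus-coupling part control $\|(u_1,u_2)\|_{H_1\times H_2}$.

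I then set up the same minimax as for Theorem~\ref{th3-30-1}. By Lemma~\ref{le3-30-2}, on $S_1\times S_2$ one has $\int_{\mathbb{R}^3}\mu_1u_1^4+\mu_2u_2^4+2\beta u_1^2u_2^2\le C_{a_1,a_2}D^{3/2}$ with $D:=\int_{\mathbb{R}^3}|\nabla u_1|^2+|\nabla u_2|^2$, so by the first paragraph $J(u_1,u_2)\ge f(D):=\tfrac12D-\tfrac14C_{a_1,a_2}D^{3/2}$ on $S_1\times S_2$; the function $f$ attains its maximum $\tfrac{8}{27C_{a_1,a_2}^2}$ at $D^\ast:=\tfrac{16}{9C_{a_1,a_2}^2}$. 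Take $w_0:=(a_1u_{V_1},a_2u_{V_2})\in S_1\times S_2$ with $u_{V_i}$ as in Remark~\ref{re3-30-1}; since $\int|\nabla(a_iu_{V_i})|^2+V_i(a_iu_{V_i})^2=\lambda_{V_i}a_i^2$ and since $\kappa>0$, $u_{V_i}\ge0$, one gets $J(w_0)\le\tfrac12(\lambda_{V_1}a_1^2+\lambda_{V_2}a_2^2)$ and $D(w_0)\le\lambda_{V_1}a_1^2+\lambda_{V_2}a_2^2$. Hypothesis \eqref{6-1-4} reads exactly $\tfrac12(\lambda_{V_1}a_1^2+\lambda_{V_2}a_2^2)<\tfrac{8}{27C_{a_1,a_2}^2}$, which also gives $D(w_0)<\tfrac{16}{27C_{a_1,a_2}^2}<D^\ast$. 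Along the $L^2$-invariant rescaling $t\star(u_1,u_2):=(t^{3/2}u_1(t\,\cdot),t^{3/2}u_2(t\,\cdot))$, assumption $(A_3)$ keeps the potential terms bounded as $t\to\infty$ while the quartic term scales like $t^3$ and the coupling term stays constant, so $J(t\star w_0)\to-\infty$; fix $\bar t$ large with $\bar w:=\bar t\star w_0$ satisfying $J(\bar w)<0$ and $D(\bar w)>D^\ast$. Putting $\Gamma:=\{\gamma\in C([0,1],S_1\times S_2):\gamma(0)=w_0,\ \gamma(1)=\bar w\}$ and $c:=\inf_{\gamma\in\Gamma}\max_{s\in[0,1]}J(\gamma(s))$, every $\gamma\in\Gamma$ must cross $\{D=D^\ast\}$, so $c\ge f(D^\ast)=\tfrac{8}{27C_{a_1,a_2}^2}>\max\{J(w_0),J(\bar w)\}$; thus $0<c<\infty$ and $c$ is a genuine minimax level on the Finsler manifold $S_1\times S_2$.

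Now I apply the minimax principle on the Finsler manifold (\cite{GH}), in the form that also exploits the rescaling direction, to produce a Palais--Smale sequence $(u_1^n,u_2^n)\subset S_1\times S_2$ for $J|_{S_1\times S_2}$ at level $c$ carrying an approximate Pohozaev identity. Together with $(A_4)$ (so that $2V_i+\nabla V_i\cdot x\ge -C_1$ controls the dangerous terms) and $(A_3)$, this bounds $D$ and $\int_{\mathbb{R}^3}V_i(u_i^n)^2$, and the coercivity of the first paragraph then bounds $(u_1^n,u_2^n)$ in $H_1\times H_2$, the coupling contributing only $|\kappa\int u_1^nu_2^n|\le\kappa a_1a_2$; this is exactly as in Theorems~\ref{th3-30-1}--\ref{th3-30-2}. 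By $(A_1)$--$(A_2)$ the embeddings $H_i\hookrightarrow L^p(\mathbb{R}^3)$ are compact for $2\le p<6$, so along a subsequence $u_i^n\to u_{i,0}$ strongly in such $L^p$ and weakly in $H_i$; the associated Lagrange multipliers $\lambda_i^n$ are bounded (using the $H_1\times H_2$ bound and the compact embedding), so $\lambda_i^n\to\lambda_i\in\mathbb{R}$, the limit solves \eqref{yun3-10-1}, the constraints \eqref{yun3-10-2} survive by $L^2$-convergence so $(u_{1,0},u_{2,0})\in S_1\times S_2$, and testing the approximate equations upgrades the convergence to strong, giving $J(u_{1,0},u_{2,0})=c>0$. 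Finally, since $V_i>0$ and $\kappa>0$, the map $(u_1,u_2)\mapsto(|u_1|,|u_2|)$ preserves $S_1\times S_2$ and does not increase $J$ (as $\int|\nabla|u_i||^2\le\int|\nabla u_i|^2$ and $-\kappa\int|u_1||u_2|\le-\kappa\int u_1u_2$), so $\Gamma$ and the Palais--Smale sequence may be chosen with nonnegative components and hence $u_{1,0},u_{2,0}\ge0$; nontriviality is automatic because $\kappa\ne0$ (if $u_{2,0}\equiv0$ the second equation of \eqref{yun3-10-1} forces $\kappa u_{1,0}\equiv0$, hence $u_{1,0}\equiv0$ and $(u_{1,0},u_{2,0})\notin S_1\times S_2$, a contradiction, and symmetrically for $u_{1,0}$); and from
\[
-\Delta u_{i,0}+V_iu_{i,0}-\lambda_iu_{i,0}=\mu_iu_{i,0}^3+\beta u_{i,0}u_{j,0}^2+\kappa u_{j,0}\ \ge\ 0
\]
elliptic regularity and the strong maximum principle give $u_{i,0}>0$, $i=1,2$.

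The main obstacle is the minimax construction of the first two paragraphs taken together: one must check that the coercivity provided by $\kappa<\sqrt{b_1b_2}$ really lets the reference point $w_0$, the separating sphere $\{D=D^\ast\}$ and the strict gap $J(w_0)<c$ of the uncoupled case carry over unchanged, so that the unaltered mass bound \eqref{6-1-4} still closes the argument, and that the Palais--Smale sequence furnished by the Finsler-manifold minimax can be arranged to satisfy the asymptotic Pohozaev identity, so that $(A_4)$ genuinely yields boundedness; the remaining compactness and regularity steps are then routine given $(A_1)$--$(A_2)$.
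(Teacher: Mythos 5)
Your argument is correct and follows essentially the paper's own strategy: in both, the key point is that $0<\kappa<\sqrt{\inf_{\mathbb{R}^3}V_1\cdot\inf_{\mathbb{R}^3}V_2}$ makes the quadratic form $V_1u_1^2+V_2u_2^2-2\kappa u_1u_2$ pointwise nonnegative, so the $\kappa=0$ minimax geometry, the Jeanjean-scaling Palais--Smale sequence, the $(A_4)$ boundedness argument and the compact-embedding passage to the limit all carry over with the unchanged mass condition \eqref{6-1-4}. The only (cosmetic) difference is that you separate the path class by the sphere $\{\int_{\mathbb{R}^3}|\nabla u_1|^2+|\nabla u_2|^2=D^{\ast}\}$, whereas the paper uses the level set $\bar B_K$ of the full quadratic form including the coupling term.
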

Similarly, for $\kappa<0$ we have the following theorem.
\begin{thm}\label{th3-30-5}
Assume $(A_1)$, $(A_2)$, $(A_3)$, $(A_4)$ hold and $-\sqrt{\inf_{\mathbb{R}^3}V_1\cdot\inf_{\mathbb{R}^3}V_2}<\kappa<0$, if $a_1$, $a_2$ satisfy that
\begin{equation}\label{6-1-5}
(\lambda_{V_1}a_1^2+\lambda_{V_2}a_2^2)C_{a_1,a_2}^2<\frac{16}{27},
\end{equation} then \eqref{yun3-10-1}-\eqref{yun3-10-2} has a solution $(\lambda_1,\lambda_2,u_{1,0},u_{2,0})$, and $u_{1,0}\geq 0$, $u_{2,0}\leq 0$ or $u_{1,0}\leq 0$, $u_{2,0}\geq 0$.
\end{thm}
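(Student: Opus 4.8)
To prove Theorem~\ref{th3-30-5} the plan is to reduce it, by a sign change in the second component, to Theorem~\ref{th3-30-4}; as an alternative that does not quote Theorem~\ref{th3-30-4}, one can rerun the proof of Theorem~\ref{th3-30-4} with a minimax class adapted to the sign of $\kappa$.

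\textbf{Reduction to the case $\kappa>0$.} Set $\tilde\kappa:=-\kappa$, so that the hypothesis $-\sqrt{\inf_{\mathbb{R}^3}V_1\cdot\inf_{\mathbb{R}^3}V_2}<\kappa<0$ becomes $0<\tilde\kappa<\sqrt{\inf_{\mathbb{R}^3}V_1\cdot\inf_{\mathbb{R}^3}V_2}$, while condition \eqref{6-1-5} is literally condition \eqref{6-1-4} (the constant $C_{a_1,a_2}$ does not depend on $\kappa$). Let $T\colon S_1\times S_2\to S_1\times S_2$ be the map $T(u_1,u_2)=(u_1,-u_2)$; since $|-u_2|_2=|u_2|_2$ this is a well-defined involutive diffeomorphism of $S_1\times S_2$. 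Writing $J_\kappa$ for the functional \eqref{yun3-10-4} with coupling constant $\kappa$, the identities $\int_{\mathbb{R}^3}|\nabla(-u_2)|^2=\int_{\mathbb{R}^3}|\nabla u_2|^2$, $\int_{\mathbb{R}^3}V_2(-u_2)^2=\int_{\mathbb{R}^3}V_2u_2^2$, $u_1^2(-u_2)^2=u_1^2u_2^2$, $(-u_2)^4=u_2^4$ and $\int_{\mathbb{R}^3}u_1(-u_2)=-\int_{\mathbb{R}^3}u_1u_2$ give
\[
J_\kappa=J_{\tilde\kappa}\circ T\qquad\text{on }S_1\times S_2,
\]
so $T$ maps the constrained critical points of $J_\kappa|_{S_1\times S_2}$ bijectively onto those of $J_{\tilde\kappa}|_{S_1\times S_2}$, with the same critical values and --- since the $L^2$ norms of the components are unchanged --- the same Lagrange multipliers $\lambda_1,\lambda_2$. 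By Theorem~\ref{th3-30-4} applied with $\tilde\kappa$ in place of $\kappa$, the system with coupling $\tilde\kappa$ has a solution $(\lambda_1,\lambda_2,w_1,w_2)$ with $w_1\not\equiv 0$, $w_2\not\equiv 0$, $w_1,w_2\ge 0$; then $(\lambda_1,\lambda_2,w_1,-w_2)$ solves \eqref{yun3-10-1}--\eqref{yun3-10-2} for the original $\kappa$ with $w_1\ge 0$, $-w_2\le 0$, and, using also the symmetry $(u_1,u_2)\mapsto(-u_1,-u_2)$ which leaves $J_{\tilde\kappa}$ invariant, the solution $(-w_1,w_2)$ with $-w_1\le 0$, $w_2\ge 0$. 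This is the conclusion of Theorem~\ref{th3-30-5}.

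\textbf{Alternative direct route.} One can also repeat the proof of Theorem~\ref{th3-30-4} verbatim, replacing nonnegative test functions by pairs whose two components carry opposite signs: with $u_{V_1},u_{V_2}$ from Remark~\ref{re3-30-1} and the $L^2$-invariant dilation $u\mapsto s^{3/2}u(s\,\cdot)$ one builds on $S_1\times S_2$ a mountain-pass-type minimax ($J$ being unbounded below on $S_1\times S_2$), with endpoints of the form $(\pm a_1u_{V_1},\mp a_2u_{V_2})$ suitably dilated, chosen so that the linear term $-\kappa\int_{\mathbb{R}^3}u_1u_2$ is there of favorable sign; condition \eqref{6-1-5} is what makes this minimax geometry nondegenerate. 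The estimate controlling the admissible range of $\kappa$ is that, because $|\kappa|<\sqrt{b_1b_2}$ with $b_i=\inf_{\mathbb{R}^3}V_i>0$, for every $(u_1,u_2)\in S_1\times S_2$
\[
\Bigl|\,\kappa\int_{\mathbb{R}^3}u_1u_2\Bigr|\le|\kappa|\,a_1a_2\le\sqrt{b_1b_2}\,a_1a_2\le\tfrac12\bigl(b_1a_1^2+b_2a_2^2\bigr)\le\tfrac12\int_{\mathbb{R}^3}\bigl(V_1u_1^2+V_2u_2^2\bigr),
\]
so that the quadratic part of $J$ on $S_1\times S_2$ absorbs the linear term and stays coercive; this is exactly why the mass condition here is the same $\frac{16}{27}$ as in the case $\kappa=0$ (Theorem~\ref{th3-30-1}) rather than the smaller $\frac{8^2}{9^3}$ of Theorem~\ref{th3-30-3}. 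Finally, the Palais--Smale condition for $J|_{S_1\times S_2}$ reduces, via the compact embedding $H_i\hookrightarrow L^p(\mathbb{R}^3)$ for $2\le p<6$ (from $(A_1)$, $(A_2)$), to the boundedness of the Palais--Smale sequence, which is where $(A_3)$, $(A_4)$ enter; and the signs of the solution follow by testing the equations with $|u_1|$ and $-|u_2|$ and applying the strong maximum principle, the coupling with $\kappa<0$ forcing strict opposite signs.

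\textbf{Main difficulty.} Along the reduction there is essentially no hard step: one only checks that $T$ is a diffeomorphism of $S_1\times S_2$ intertwining $J_\kappa$ and $J_{-\kappa}$ together with their constrained gradients, a routine computation, so that all the analytic work is already contained in Theorem~\ref{th3-30-4}. Along the direct route the genuine difficulty is the same as for Theorem~\ref{th3-30-4}: setting up the minimax geometry quantitatively --- where \eqref{6-1-5} is used --- and proving that the resulting Palais--Smale sequence is bounded in $H_1\times H_2$ --- where $(A_3)$, $(A_4)$ are used; only the cosmetic modification allowing opposite-sign test functions is new.
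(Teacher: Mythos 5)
Your primary argument (the reduction) is correct, and it is a genuinely different packaging from what the paper does. The paper proves Theorem~\ref{th3-30-5} by re-running the minimax machinery of Theorem~\ref{th3-30-4} on the sets $\bar A_K$, $\bar B_K$, with the single modification that the approximating paths are chosen with components of opposite signs, using $J(|u_1|,-|u_2|)\leq J(u_1,u_2)$ for $\kappa<0$ (this is the content of the one-line proofs of Theorems~\ref{th3-30-3} and~\ref{th3-30-5}, which defer to Theorems~\ref{th3-30-2} and~\ref{th3-30-4}). You instead observe that the involution $T(u_1,u_2)=(u_1,-u_2)$ of $S_1\times S_2$ satisfies $J_\kappa=J_{-\kappa}\circ T$ and conjugates the two systems (the second equation picks up an overall sign, turning $-\kappa u_1$ into $\kappa u_1$ after replacing $u_2$ by $-u_2$), so Theorem~\ref{th3-30-5} is an immediate corollary of Theorem~\ref{th3-30-4}; since \eqref{6-1-5} coincides with \eqref{6-1-4} and $C_{a_1,a_2}$ is independent of $\kappa$, no hypothesis is lost. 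This is shorter, less error-prone, and makes the sign conclusion automatic, whereas the paper's route has the merit of showing explicitly how the minimax geometry and the sign-adapted paths carry over. Your ``alternative direct route'' is essentially the paper's argument, with one caveat: the closing claim that the signs of $u_{1,0},u_{2,0}$ ``follow by testing the equations with $|u_1|$ and $-|u_2|$ and applying the strong maximum principle'' is not how this works and would not succeed as stated --- the sign information must be built into the minimax by choosing the paths $\gamma_{1,n}\geq 0$, $\gamma_{2,n}\leq 0$ (or vice versa) and using that the Palais--Smale sequence produced by Theorem 3.2 of \cite{GH} stays asymptotically close to those paths, so that $u_{1,n}^{-}\to 0$ and $u_{2,n}^{+}\to 0$. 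Since your reduction route does not rely on that remark, the proof as a whole stands.
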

\begin{remark}
Condition \eqref{6-1-2}, \eqref{6-1-1} are used to construct the minimax structures in Section 2 and Section 3  respectively.

\end{remark}
The paper is organized as follws.  In Section 2 we give some auxiliary results and study the existence of critical points of $J|_{S_1\times S_2}$ when $\kappa=0$. From \cite{B-2} we know that \eqref{yun3-10-1}-\eqref{yun3-10-2} is $L^2$-supercritical case, then $\inf_{S_1\times S_2} J$ can not be achieved, so we use the minimax theory on manifold which was introduced in \cite{GH} to get a minimax point of $J|_{S_1\times S_2}$. In Section 3 we study the existence of solutions for \eqref{yun3-10-1}-\eqref{yun3-10-2} when $\kappa\neq 0$, the minimax structure  constructed in Section 2 still holds, then we can get the conclusion which is similar to Section 2.
\section{The case $\kappa=0$}
First we give some results which will be useful in the following discussion.
\begin{lemma}\label{le3-30-1}
Assume $(A_1)$ and $(A_2)$ hold, then the embedding
$$H_i\hookrightarrow L^p(\mathbb{R}^3),$$
is compact when $2\leq p<2^*=6$.
\end{lemma}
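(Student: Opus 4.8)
The plan is to use the classical argument for Schrödinger operators with confining potentials, combining the continuous Sobolev embedding with a "tightness at infinity" estimate coming from $(A_2)$. First I would fix $i$ and write $b_i = \inf_{\mathbb{R}^3} V_i > 0$; then by $(A_1)$ the norm $\|u\|_{H_i}^2 := \int_{\mathbb{R}^3} |\nabla u|^2 + V_i(x)u^2$ dominates $b_i \int u^2$ and $\int |\nabla u|^2$, so $H_i \hookrightarrow H^1(\mathbb{R}^3)$ continuously, and in particular $H_i \hookrightarrow L^p(\mathbb{R}^3)$ continuously for $2 \le p \le 6$. It therefore suffices to prove compactness for $p=2$; the range $2 < p < 6$ then follows by interpolation, writing $|u|_p \le |u|_2^{1-\theta}|u|_6^\theta$ with $\theta \in (0,1)$, since a bounded sequence in $H_i$ is bounded in $L^6$ and convergent (along a subsequence) in $L^2$, hence Cauchy in $L^p$.

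To prove compactness into $L^2$, let $(u_n)$ be bounded in $H_i$, say $\|u_n\|_{H_i} \le M$. Up to a subsequence $u_n \rightharpoonup u$ weakly in $H_i$ (hence in $H^1(\mathbb{R}^3)$), and by Rellich–Kondrachov $u_n \to u$ strongly in $L^2_{\mathrm{loc}}(\mathbb{R}^3)$, in particular on every ball $B_R$. The key step is to control the mass outside a large ball uniformly in $n$: given $\varepsilon > 0$, condition $(A_2)$ provides $R > 0$ such that $V_i(x) \ge M^2/\varepsilon$ for $|x| \ge R$, whence
\begin{equation}
\int_{|x| \ge R} u_n^2 \le \frac{\varepsilon}{M^2}\int_{|x|\ge R} V_i(x) u_n^2 \le \frac{\varepsilon}{M^2}\,\|u_n\|_{H_i}^2 \le \varepsilon, \nonumber
\end{equation}
and the same bound holds for $u$ by weak lower semicontinuity. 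Splitting $\int_{\mathbb{R}^3}|u_n - u|^2 = \int_{B_R}|u_n-u|^2 + \int_{|x|\ge R}|u_n-u|^2$, the first term tends to $0$ by local strong convergence and the second is $\le 2\int_{|x|\ge R}u_n^2 + 2\int_{|x|\ge R}u^2 \le 4\varepsilon$; letting $n \to \infty$ and then $\varepsilon \to 0$ gives $u_n \to u$ in $L^2(\mathbb{R}^3)$.

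I do not expect a genuine obstacle here — this is a standard compactness lemma. The only point requiring a little care is the uniform tail estimate: one must use that the $H_i$-norm already contains the full weighted term $\int V_i(x)u_n^2$ (not merely $\int |\nabla u_n|^2 + u_n^2$), so that the divergence of $V_i$ at infinity can be traded against a uniform bound. Everything else (the continuous embedding, Rellich on balls, the interpolation to pass from $p=2$ to $2<p<6$) is routine.
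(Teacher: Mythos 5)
Your proof is correct and complete; the paper itself does not reproduce an argument for this lemma but simply cites it as a classical result (Bartsch--Wang--Willem), and your argument --- continuous embedding $H_i\hookrightarrow H^1$, Rellich on balls, the uniform tail estimate $\int_{|x|\ge R}u_n^2\le \varepsilon$ from the divergence of $V_i$, and interpolation between $L^2$ and $L^6$ --- is exactly the standard proof found in that reference. Nothing to add.
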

\begin{proof}
This is a classical result, the proof can be found in \cite{T-W-M}.
\end{proof}
Next lemma is the well known Gagliardo-Nirenberg inequality, we will call it G-N inequality for simplicity, which is useful to construct the minimax structure of $J$ on the manifold $S_1\times S_2$.
\begin{lemma}\label{le3-30-2}[Gagliardo-Nirenberg inequality]
For any $u\in H^1(\mathbb{R}^3)$ we have
\begin{equation}\label{yun3-10-5}
|u|_p\leq G_{p} |\nabla u|^{\alpha}_2|u|^{1-\alpha}_2,\nonumber
\end{equation}
where $\alpha=\frac{3(p-2)}{2p}$.
\end{lemma}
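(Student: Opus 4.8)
The plan is to obtain the inequality from the Sobolev embedding $H^1(\mathbb{R}^3)\hookrightarrow L^6(\mathbb{R}^3)$ combined with Hölder interpolation, and then to read off the exponent $\alpha$ by elementary bookkeeping. First I would recall the three-dimensional Sobolev inequality: there is a constant $S>0$ with
\[
|u|_6\le S^{-1/2}\,|\nabla u|_2\qquad\text{for all }u\in H^1(\mathbb{R}^3),
\]
a classical fact (with the sharp value of $S$ computed by Talenti and Aubin); in particular $u\in L^p(\mathbb{R}^3)$ for every $2\le p\le 6$.

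Next, given $2\le p\le 6$, I would pick $\theta\in[0,1]$ with $\frac1p=\frac{1-\theta}{2}+\frac{\theta}{6}$; clearing denominators gives $\frac6p=3-2\theta$, hence $\theta=\frac{3(p-2)}{2p}=\alpha$. Applying Hölder's inequality to the splitting $|u|^p=|u|^{(1-\theta)p}\cdot|u|^{\theta p}$ with conjugate exponents $\frac{2}{(1-\theta)p}$ and $\frac{6}{\theta p}$ yields $|u|_p\le|u|_2^{1-\alpha}|u|_6^{\alpha}$ (the endpoints $p=2$ and $p=6$ being trivial). Combining with the Sobolev inequality,
\[
|u|_p\le|u|_2^{1-\alpha}\bigl(S^{-1/2}|\nabla u|_2\bigr)^{\alpha}=S^{-\alpha/2}\,|\nabla u|_2^{\alpha}\,|u|_2^{1-\alpha},
\]
which is the claimed estimate with an explicit admissible constant. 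One then defines $G_p$ to be the best (smallest) such constant, equivalently $G_p^{-1}=\inf\{\,|\nabla u|_2^{\alpha}|u|_2^{1-\alpha}/|u|_p : u\in H^1(\mathbb{R}^3)\setminus\{0\}\,\}$; for $2<p<6$, in particular for $p=4$, a standard Schwarz-symmetrization together with the radial compactness lemma shows this infimum is positive and attained, the extremal being, up to scaling and translation, the positive ground state of a scalar NLS, although in the sequel only finiteness of $G_p$ is actually needed.

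I expect no genuine obstacle here: the statement is the classical Gagliardo–Nirenberg inequality specialized to $N=3$, and indeed the excerpt treats it as known. The only points demanding any care are the identification of the interpolation parameter with $\alpha=\frac{3(p-2)}{2p}$ and, if one insists on the sharp constant rather than merely a finite one, the symmetrization/concentration–compactness argument establishing the existence of an optimizer.
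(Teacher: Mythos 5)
Your derivation is correct: the interpolation identity $\frac1p=\frac{1-\theta}{2}+\frac{\theta}{6}$ does give $\theta=\frac{3(p-2)}{2p}=\alpha$, the H\"older step is applied with the right conjugate exponents, and combining with the Sobolev embedding $H^1(\mathbb{R}^3)\hookrightarrow L^6(\mathbb{R}^3)$ yields exactly the stated bound (valid for $2\le p\le 6$, which is the only range where $\alpha\in[0,1]$, and $p=4$ is all the paper ever uses). The paper itself offers no proof of this lemma --- it is quoted as the classical Gagliardo--Nirenberg inequality --- so your argument is simply the standard derivation one would supply; the remarks about sharpness of $G_p$ and existence of an optimizer are accurate but, as you note, not needed anywhere in the sequel, where only finiteness of $G_4$ enters.
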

\begin{remark}
By G-N inequality, we have the following useful inequality.
\begin{equation}\label{eq3-30-1}
\begin{split}
\int_{\mathbb{R}^3}\mu_1 u_1^4+\mu_2 u_2^4+2\beta u_1^2 u_2^2&\leq(\mu_1+\beta)\int_{\mathbb{R}^3} u_1^4+(\mu_2+\beta )\int_{\mathbb{R}^3}u_2^4\\
&\leq(\mu_1+\beta)a_1G_4^4|\nabla u_1|_2^3+(\mu_2+\beta)a_2G_4^4|\nabla u_2|_2^3\\
&\leq C_{a_1,a_2}(\int_{\mathbb{R}^3}|\nabla u_1|^2+|\nabla u_2|^2)^{\frac{3}{2}}\\
&\leq C_{a_1,a_2}(\int_{\mathbb{R}^3}|\nabla u_1|^2+V_1(x)u_1^2+|\nabla u_2|^2+V_2(x)u_2^2)^{\frac{3}{2}},
\end{split}
\end{equation}
 $\forall (u_1,u_2)\in H_1\times H_2$.
\end{remark}
Next we prove Theorem \ref{th3-30-1} by following steps.

\textbf{Step 1: }Construct the minimax structure of $J$ on manifold $S_1\times S_2$.

\textbf{Step 2: }Find a bounded PS sequence of $J|_{S_1\times S_2}$.

\textbf{Step 3: }Use compact embedding theorem to get the conclusion.

For this purpose, we introduce following sets to get the minimax structure:
$$A_K:=\{(u_1,u_2)\in S_1\times S_2:\int_{\mathbb{R}^3} |\nabla u_1|^2+V_1(x)u_1^2+\int_{\mathbb{R}^3}|\nabla u_2|^2+V_2(x)u_2^2\leq K\},$$
$$B_K:=\{(u_1,u_2)\in S_1\times S_2:\int_{\mathbb{R}^3} |\nabla u_1|^2+V_1(x)u_1^2+\int_{\mathbb{R}^3}|\nabla u_2|^2+V_2(x)u_2^2=\theta K\},$$
where $K>0$ and $\theta>1$ are fixed constants.
\par
Next lemma shows that there is a minimax structure on $A_K$ and $B_K$.
\begin{lemma}\label{le4-7-1}
Assume \eqref{6-1-2} holds and $\theta=3$, then there exists a $K>0$ such that
$$\inf_{B_K}J(u_1,u_2)>0,$$
and
$$\inf_{B_K}J>\sup_{A_K}J.$$
\end{lemma}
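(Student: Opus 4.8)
The plan is to exploit the exponent gap between the quadratic form $Q(u_1,u_2):=\int_{\mathbb{R}^3}|\nabla u_1|^2+V_1u_1^2+|\nabla u_2|^2+V_2u_2^2$ (which appears linearly in $J$) and the quartic term (which is controlled by $Q^{3/2}$ via the G-N inequality \eqref{eq3-30-1}, since $\kappa=0$). Writing $t:=Q(u_1,u_2)$, for $(u_1,u_2)\in S_1\times S_2$ we have the pointwise lower bound
\begin{equation}\label{eq:Jbound-plan}
J(u_1,u_2)\geq \frac12 t-\frac14 C_{a_1,a_2}t^{3/2}=:g(t).
\end{equation}
First I would analyze the scalar function $g(t)=\frac12 t-\frac14 C_{a_1,a_2}t^{3/2}$: it vanishes at $t=0$, is positive on $(0,t_\ast)$ with $t_\ast=\big(2/C_{a_1,a_2}\big)^2=4/C_{a_1,a_2}^2$, and attains its maximum at $t_0=\frac{16}{9C_{a_1,a_2}^2}$ with $g(t_0)=\frac12 t_0-\frac14 C_{a_1,a_2}t_0^{3/2}$. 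A direct computation gives $g(t_0)=\frac{16}{27C_{a_1,a_2}^2}\cdot\frac{1}{?}$ — more precisely $g(t_0)=\frac{1}{2}\cdot\frac{16}{9C_{a_1,a_2}^2}-\frac14 C_{a_1,a_2}\big(\tfrac{16}{9C_{a_1,a_2}^2}\big)^{3/2}=\frac{8}{9C_{a_1,a_2}^2}-\frac{16}{27C_{a_1,a_2}^2}=\frac{8}{27C_{a_1,a_2}^2}$.

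Next I would choose $K$ so that $\theta K=3K=t_0$, i.e. $K=\frac{16}{27C_{a_1,a_2}^2}$, so that on $B_K$ we have $t=3K=t_0$ and hence $\inf_{B_K}J\geq g(t_0)=\frac{8}{27C_{a_1,a_2}^2}>0$, giving the first assertion. For the second assertion I would bound $\sup_{A_K}J$ from above. On $A_K$ we have $t\leq K=t_0/3=\frac{16}{27C_{a_1,a_2}^2}$, and since $J(u_1,u_2)\leq \frac12 t$ (the quartic term is nonnegative and $\kappa=0$), we get $\sup_{A_K}J\leq \frac12 K=\frac{8}{27C_{a_1,a_2}^2}$. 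This bound is too weak (it only matches $g(t_0)$), so I would sharpen it: the crucial point is that $A_K$ is nonempty and we must ensure it is — here condition \eqref{6-1-2} enters. Testing $J$ at the pair $(a_1 u_{V_1}, a_2 u_{V_2})$ from Remark \ref{re3-30-1} (which lies in $S_1\times S_2$) shows $Q=\lambda_{V_1}a_1^2+\lambda_{V_2}a_2^2$, and \eqref{6-1-2} is exactly the statement that $(\lambda_{V_1}a_1^2+\lambda_{V_2}a_2^2)C_{a_1,a_2}^2<16/27$, i.e. that this $Q$-value is $<K$, so $A_K\neq\emptyset$. To get the strict inequality $\inf_{B_K}J>\sup_{A_K}J$ I would instead take $K$ slightly smaller than $\frac{16}{27C_{a_1,a_2}^2}$ but still larger than $\lambda_{V_1}a_1^2+\lambda_{V_2}a_2^2$ (possible precisely because of the strict inequality in \eqref{6-1-2}); then $\sup_{A_K}J\leq \frac12 K<\frac{8}{27C_{a_1,a_2}^2}$, while $\inf_{B_K}J\geq g(3K)$. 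One checks $g$ is still increasing near $t_0$ only if $3K\leq t_0$; since $3K<t_0$, $g(3K)<g(t_0)$, so I must verify $g(3K)>\frac12 K$, i.e. $\frac32 K-\frac14 C_{a_1,a_2}(3K)^{3/2}>\frac12 K$, i.e. $K>\frac14 C_{a_1,a_2}3^{3/2}K^{3/2}\cdot\frac{1}{?}$ — this reduces to $1>\frac{3\sqrt3}{4}C_{a_1,a_2}\sqrt K$, i.e. $K<\frac{16}{27C_{a_1,a_2}^2}$, which holds by choice of $K$. Thus both inequalities follow simultaneously.

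The main obstacle is bookkeeping: making the two estimates $\inf_{B_K}J\geq g(3K)$ and $\sup_{A_K}J\leq \frac12 K$ simultaneously compatible with a strict gap, which forces $K$ into the open interval $\big(\lambda_{V_1}a_1^2+\lambda_{V_2}a_2^2,\ \tfrac{16}{27C_{a_1,a_2}^2}\big)$ — nonempty exactly by hypothesis \eqref{6-1-2} — and then checking $g(3K)>\tfrac12 K$ on that interval. A secondary subtlety is confirming that $A_K$ is path-connected-enough and nonempty so that the minimax class over paths joining points of $A_K$ to the region below is well-defined; but since this lemma only asserts the two numerical inequalities, the geometric content is deferred and the argument above suffices. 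I would also record that $\theta=3$ is not special beyond making the arithmetic with the $t^{3/2}$ exponent clean (the optimal split of the maximizer $t_0$ of $g$ between the ``low'' level $K$ and the ``barrier'' level $\theta K$); any $\theta>1$ with $\theta K<t_\ast$ and $K$ below the G-N threshold would work, and $\theta=3$ is the choice that lands the barrier exactly at the maximum of $g$.
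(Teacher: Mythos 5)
Your argument is correct and is essentially the paper's own proof: both use the G--N bound \eqref{eq3-30-1} to get $J\geq \frac12 Q-\frac14 C_{a_1,a_2}Q^{3/2}$, choose $\theta=3$ and $K\in\bigl(\lambda_{V_1}a_1^2+\lambda_{V_2}a_2^2,\ \tfrac{16}{27C_{a_1,a_2}^2}\bigr)$ (nonempty by \eqref{6-1-2}, which also makes $A_K\neq\emptyset$ via $(a_1u_{V_1},a_2u_{V_2})$), and reduce both inequalities to $K-\frac{C_{a_1,a_2}}{4}(3K)^{3/2}>0$. The detour through the maximizer $t_0$ of $g$ and the stray ``$\frac{1}{?}$'' placeholders are harmless; the final bookkeeping matches the paper exactly.
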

\begin{proof}
First we take $(u_1,u_2)\in B_K$, and choose $K$ such that
$$ \lambda_{V_1}a_1^2+\lambda_{V_2}a_2^2<K<\frac{16}{27C_{a_1,a_2}^2},$$ where $\frac{16}{27C_{a_1,a_2}^2}$ is the biggest zero point of function
$$ x-\frac{C_{a_1,a_2}}{4}(3 x)^{\frac{3}{2}},$$
and for $(u_1,u_2)\in B_K$ there exists $\delta_1>0$ such that
\begin{equation}\label{3-7-2}
\begin{split}
J(u_1,u_2)=&\frac{1}{2}\int_{\mathbb{R}^3} |\nabla u_1|^2+V_1(x)u_1^2+\frac{1}{2}\int_{\mathbb{R}^3}|\nabla u_2|^2+V_2(x)u_2^2\\
&-\frac{1}{4}\int_{\mathbb{R}^3}\mu_1 u_1^4+\mu_2 u_2^4+2\beta u_1^2u_2^2\\
\geq &\frac{3}{2}K-\frac{C_{a_1,a_2}}{4}(3 K)^{\frac{3}{2}}\\
>&\delta_1.
\end{split}
\end{equation}
Next we take $(v_1,v_2)\in B_K$ and $(u_1,u_2)\in A_K$ there exists $\delta_2>0$ such that
\begin{equation}\label{eq3-30-3}
\begin{split}
J(v_1,v_2)-J(u_1,u_2)=&\frac{1}{2}\int_{\mathbb{R}^3} |\nabla v_1|^2+V_1(x)v_{1}^2+|\nabla v_2|^2+V_2(x)v_{2}^2\\
&-\frac{1}{2}\int_{\mathbb{R}^3} |\nabla u_1|^2+V_1(x)u_{1}^2+|\nabla u_2|^2+V_2(x)u_{2}^2\\
&-\frac{1}{4}\int_{\mathbb{R}^3} \mu_1v_1^4+\mu_2 v_2^4+2\beta v_1^2 v_2^2+\frac{1}{4}\int_{\mathbb{R}^3} \mu_1 u_1^4+\mu_2 u_2^4+2\beta u_1^2u_2^2\\
\geq& \frac{1}{2}\int_{\mathbb{R}^3} |\nabla v_1|^2+V_1(x)v_{1}^2+|\nabla v_2|^2+V_2(x)v_{2}^2\\
&-\frac{1}{2}\int_{\mathbb{R}^3} |\nabla u_1|^2+V_1(x)u_{1}^2+|\nabla u_2|^2+V_2(x)u_{2}^2\\
&-\frac{1}{4}\int_{\mathbb{R}^3} \mu_1v_1^4+\mu_2 v_2^4+2\beta v_1^2 v_2^2\\
\geq& K-\frac{C_{a_1,a_2}}{4}(3K)^{\frac{3}{2}}\\
>&\delta_2.
\end{split}
\end{equation}
This finishes the proof.
\end{proof}
\begin{remark}
It is obvious that condition \eqref{6-1-2} is in order to ensure that $A_K$ is not empty, when $\lambda_{V_1}a_1^2+\lambda_{V_2}a_2^2<K\leq\frac{16}{27C_{a_1,a_2}^2}$. In fact, choose  $u_1=a_1u_{V_1}$ and $u_2=a_2u_{V_2}$ where $u_{V_i}$, $i=1,2$ are defined in Remark \ref{re3-30-1}.
\end{remark}
Now we use the transformation which was introduced in \cite{J-1}, in order to find a minimax structure of $J|_{S_1\times S_2}$,  we take  $s\in\mathbb{R}$ and define
$$(s\star u)(x):=e^{\frac{3}{2}}u(e^s x)\hbox{,}$$
it is clearly that
$$|s\star u|_2^2=|u|_2^2,~|\nabla(s\star u)|_2^2=e^{2s}|\nabla u|_2^2,$$
the condition $(A_3)$ ensure that, if $u\in H_i$, then $(s\star u)\in H_i$ for $i=1,2$, $s\in\mathbb{R}$. In fact
\begin{equation}
\begin{split}
\int_{\mathbb{R}^3}|\nabla (s\star u)|^2+V_i(x)(s\star u)^2&=e^{2s}\int_{\mathbb{R}^3}|\nabla u|^2+\int_{\mathbb{R}^3}V_i(e^{-s}x)u^2\\
&\leq e^{2s}\int_{\mathbb{R}^3}|\nabla u|^2+D(e^{-s})\int_{\mathbb{R}^3}V_i(x)u^2\\
&<\infty.\nonumber
\end{split}
\end{equation}
Now we can construct the minimax structure of $J|_{S_1\times S_2}$. First we fix a point $(v_1,v_2)\in A_K$ which is nonnegative and take
$w_i=(s*v_i)$, $i=1,2$ for $s\in\mathbb{R}^+$ large enough, by $(A_3)$ we have, $D(e^{-s})\rightarrow D(0)$ as $s\rightarrow \infty$, thus
\begin{equation}
\begin{split}
J(w_1,w_2)=&\frac{e^{2s}}{2}\int_{\mathbb{R}^3}|\nabla v_1|^2+|\nabla v_2|^2+\int_{\mathbb{R}^3}V_1(e^{-s}x)v_1^2+V_2(e^{-s}x)v_2^2\\
&-\frac{e^{3s}}{4}\int_{\mathbb{R}^3} \mu_1v_1^4+\mu_2v_2^4+2\beta v_1^2v_2^2\\
\leq{}&\frac{e^{2s}}{2}\int_{\mathbb{R}^3}|\nabla v_1|^2+|\nabla v_2|^2+D(e^{-s})\int_{\mathbb{R}^3}V_1(x)v_1^2+V_2(x)v_2^2\\
&-\frac{e^{3s}}{4}\int_{\mathbb{R}^3} \mu_1v_1^4+\mu_2v_2^4+2\beta v_1^2v_2^2\\
\rightarrow& -\infty,\nonumber
\end{split}
\end{equation}
as $s\rightarrow \infty$. Next we can fix $s\in\mathbb{R}$ large enough, such that $J(w_1,w_2)$ is negative enough. Moreover, any path from $(v_1, v_2)$ to $(w_1,w_2)$ must pass through $B_{K}$. In  fact
\begin{equation}\label{eq3-31-1}
\begin{split}
&\int_{\mathbb{R}^3}|\nabla (s\star v_1)|^2+V_1(x)(s\star v_1)^2+\int_{\mathbb{R}^3}|\nabla (s\star v_2)|^2+V_2(x)(s\star v_2)^2\\
\geq{}&\int_{\mathbb{R}^3}|\nabla (s\star v_1)|^2+\int_{\mathbb{R}^3}|\nabla (s\star v_2)|^2\\
={}&\frac{e^{2s}}{2}\int_{\mathbb{R}^3}|\nabla v_1|^2+|\nabla v_2|^2\\
\rightarrow {}&\infty,
\end{split}
\end{equation}
as $n\rightarrow\infty$.

Then we can find a minimax structure of $J|_{S_1\times S_2}$, take
\begin{equation}
\Gamma:=\{\gamma(t)=(\gamma_1(t),\gamma_2(t))\in C([0,1],S_1\times S_2):\gamma(0)=(v_1,v_2),\gamma(1)=(w_1,w_2)\},\nonumber
\end{equation}
note that every path from $(v_1,v_2)$ to $(w_1,w_2)$ must through $B_K$, then the minimax value of $J|_{S_1\times S_2}$ on $\Gamma$ can be defined as follows
\begin{equation}\label{eq3-31-2}
c:=\inf _{\gamma \in \Gamma}\sup_{t\in [0,1]}J(\gamma(t))\geq \inf_{B_K}J(u_1,u_2)>0.
\end{equation}
In order to obtain the boundedness of the PS sequence at level $c$, we use following notations.
\begin{equation}
\tilde{J}(s,u_1,u_2):=J(s\star u_1,s\star u_2)=\tilde J(0,s\star u_1,s\star u_2),\nonumber
\end{equation}
the corresponding minimax structure is as follows
$$\tilde {\Gamma}:=\{\tilde {\gamma}(t)=(s(t),\gamma_1(t),\gamma_2(t))\in C([0,1],\mathbb{R}\times S_1\times S_2):\tilde\gamma(0)=(0,v_1,v_2),\tilde\gamma(1)=(0,w_1,w_2)\},$$
and the minimax value is
$$\tilde c=\inf_{\tilde{\gamma} \in \tilde{\Gamma}}\sup_{t\in [0,1]}\tilde J(\tilde \gamma(t)).$$
It is easy to deduce that $\tilde c=c$. In fact, from $\tilde \Gamma \supset\Gamma$  we have $\tilde c\leq c$. On the other hand, for any $$\tilde\gamma(t)=(s(t),\gamma_1(t),\gamma_2(t)),$$
by definition we have
$$\tilde J(\tilde\gamma(t))=J(s(t)\star\gamma(t)),$$
and $s(t)\star \gamma (t)\in \Gamma$ is obvious, then
$$\sup_{t\in[0,1]}\tilde J(\tilde\gamma(t))\geq \inf _{\gamma \in \Gamma}\sup_{t\in [0,1]}J(\gamma(t)),$$
by defintion of $\tilde c$ we have $\tilde c\geq c$, then $\tilde c=c$.
Because
$$\tilde J(s,|u_1|,|u_2|)= \tilde J(s,u_1,u_2)=\tilde J(0,s\star u_1,s\star u_2),$$
 we can take a sequence $\tilde \gamma_n=(0,\gamma_{1,n},\gamma_{2,n})\in\tilde \Gamma$
and $\gamma_{1,n},\gamma_{2,n}\geq 0$ such that
$$c=\lim_{n\rightarrow\infty}\sup_{t\in[0,1]}\tilde J(\tilde \gamma_n(t)).$$
(Here we point out that when $\kappa>0$ we have
$$\tilde J(s,|u_1|,|u_2|)\leq\tilde J(s,u_1,u_2),$$ we can also take a sequence $\tilde \gamma_n=(0,\gamma_{1,n},\gamma_{2,n})\in\tilde \Gamma$
and $\gamma_{1,n},\gamma_{2,n}\geq 0$ such that
$$c=\lim_{n\rightarrow\infty}\sup_{t\in[0,1]}\tilde J(\tilde \gamma_n(t)).$$
Similarly, we can choose $\gamma_{1,n}\leq 0$, $\gamma_{2,n}\geq 0$ or $\gamma_{1,n}\geq 0$, $\gamma_{2,n}\leq 0$ when $\kappa<0$.)
\par
Then by Theorem 3.2 of \cite{GH} we can get a PS sequence $(s_n,\tilde u_{1,n},\tilde u_{2,n})$ of $\tilde J$ on $\mathbb{R}\times S_1\times S_2$ at level c. Moreover, we have
$$
\lim_{n\rightarrow\infty}|s_n|+dist_{H_i}((\tilde u_{1,n},\tilde u_{2,n}),(\gamma_{1,n},\gamma_{2,n}))=0.
$$
So we have $s_n\rightarrow 0$ and
$\tilde u_{i,n}^{-}\rightarrow 0$ in $H_i$, $i=1,2$, where $u^-=\min\{-u,0\}$, then take
\begin{equation}\label{eq4-7-1}
(u_{1,n},u_{2,n})=(s_n\star \tilde u_{1,n},s_n\star \tilde u_{2,n}),
\end{equation}
 we have followings.
\begin{lemma}\label{le4-7-2}
$(u_{1,n},u_{2,n})$ which obtained in \eqref{eq4-7-1} is a PS sequence of $J|_{S_1\times S_2}$ at level $c$.
\end{lemma}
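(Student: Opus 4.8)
The plan is to transfer the Palais--Smale property from $\tilde J$ on $\mathbb{R}\times S_1\times S_2$ to $J$ on $S_1\times S_2$, exploiting that $s\star\cdot$ is a one-parameter group of $L^2$-isometries and that $s_n\to 0$. Two things must be checked: that $(u_{1,n},u_{2,n})$ lies on $S_1\times S_2$ with $J(u_{1,n},u_{2,n})\to c$, and that $\|dJ|_{S_1\times S_2}(u_{1,n},u_{2,n})\|\to 0$. The first is immediate: $|s\star u|_2=|u|_2$ gives $u_{i,n}\in S_i$, and by the definition of $\tilde J$ one has $J(u_{1,n},u_{2,n})=\tilde J(s_n,\tilde u_{1,n},\tilde u_{2,n})\to c$.

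For the derivative estimate I would argue as follows. Given a tangent vector $\varphi=(\varphi_1,\varphi_2)$ at $(u_{1,n},u_{2,n})$ with $\|\varphi\|_{H_1\times H_2}\le 1$, set $\psi_i:=(-s_n)\star\varphi_i$. Since $\star$ is a group action and an $L^2$-isometry, $\langle\tilde u_{i,n},\psi_i\rangle_{L^2}=\langle u_{i,n},\varphi_i\rangle_{L^2}=0$, so $\psi=(\psi_1,\psi_2)$ is tangent to $S_1\times S_2$ at $(\tilde u_{1,n},\tilde u_{2,n})$; differentiating the identity $\tilde J(s,v_1,v_2)=J(s\star v_1,s\star v_2)$ in the variables $(v_1,v_2)$ and using linearity of $s\star\cdot$ together with $s_n\star\psi_i=\varphi_i$ yields
\[
dJ(u_{1,n},u_{2,n})[\varphi]=\partial_{(v_1,v_2)}\tilde J(s_n,\tilde u_{1,n},\tilde u_{2,n})[\psi].
\]
The PS property of $(s_n,\tilde u_{1,n},\tilde u_{2,n})$ for $\tilde J$ gives $\|d\tilde J|_{\mathbb{R}\times S_1\times S_2}(s_n,\tilde u_{1,n},\tilde u_{2,n})\|=:\varepsilon_n\to 0$, and this cotangent norm dominates the partial derivative $\partial_{(v_1,v_2)}\tilde J$, so $|dJ(u_{1,n},u_{2,n})[\varphi]|\le\varepsilon_n\|\psi\|_{H_1\times H_2}$.

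It then remains to bound $\|\psi\|_{H_1\times H_2}$ by a constant times $\|\varphi\|_{H_1\times H_2}$ uniformly in $n$. Using $|\nabla(t\star u)|_2^2=e^{2t}|\nabla u|_2^2$, the change of variables $y=e^{-s_n}x$, and $(A_3)$ with $\alpha=e^{s_n}$, one gets $\int_{\mathbb{R}^3}|\nabla\psi_i|^2+V_i(x)\psi_i^2\le\max\{e^{-2s_n},D(e^{s_n})\}\int_{\mathbb{R}^3}|\nabla\varphi_i|^2+V_i(x)\varphi_i^2$; since $s_n\to 0$ and $D$ is continuous, $\max\{e^{-2s_n},D(e^{s_n})\}$ is bounded uniformly in $n$, hence $\|\psi\|_{H_1\times H_2}\le C\|\varphi\|_{H_1\times H_2}$. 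Taking the supremum over $\|\varphi\|_{H_1\times H_2}\le 1$ gives $\|dJ|_{S_1\times S_2}(u_{1,n},u_{2,n})\|\le C\varepsilon_n\to 0$, which completes the argument.

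The main obstacle — more a careful bookkeeping task than a deep difficulty — is handling the constrained differentials correctly: one must verify that $s_n\star\cdot$ carries $T_{(\tilde u_{1,n},\tilde u_{2,n})}(S_1\times S_2)$ onto $T_{(u_{1,n},u_{2,n})}(S_1\times S_2)$ (the $L^2$-isometry property is exactly what makes this work) and that the scaling operators $(-s_n)\star\cdot$ are uniformly bounded on $H_i$, which is precisely the role of $(A_3)$ together with $s_n\to 0$. Note that boundedness of the PS sequence $(u_{1,n},u_{2,n})$ in $H_1\times H_2$ is a separate matter, established afterwards via $(A_4)$, and is not needed for this lemma.
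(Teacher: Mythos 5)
Your proposal is correct and follows essentially the same route as the paper: transfer the PS property of $\tilde J$ at $(s_n,\tilde u_{1,n},\tilde u_{2,n})$ to $J$ at $(u_{1,n},u_{2,n})$ via the chain rule for the scaling $s\star\cdot$, the correspondence of tangent spaces coming from the $L^2$-isometry, and a uniform comparison of the $H_1\times H_2$ norms of $\varphi$ and $(-s_n)\star\varphi$ as $s_n\to 0$. Your explicit operator bound $\max\{e^{-2s_n},D(e^{s_n})\}$ via $(A_3)$ is a slightly more careful rendering of the norm-equivalence step that the paper expresses through the constants $A_1,A_2$, but it is the same argument.
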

\begin{proof}
First from above we have $(s_n,\tilde u_{1,n},\tilde u_{2,n})$ is a PS sequence at level $c$. Then for any $(\phi_1,\phi_2)\in H_1\times H_2$
\begin{equation}
\begin{split}
&\tilde J'_{\textbf{u}}(s_n,\tilde u_{1,n},\tilde u_{2,n})(\phi_1,\phi_2)\\
={}&e^{2s_n}\int_{\mathbb{R}^3}\nabla\tilde u_{1,n}\cdot\nabla\phi_1+\nabla \tilde u_{2,n}\cdot\nabla \phi_2+\int_{\mathbb{R}^3}V_1(e^{-s_n}x)\tilde u_{1,n}\phi_1+V_2(e^{-s_n}x)\tilde u_{2,n}\phi_2\\
&-e^{3s_n}\int_{\mathbb{R}^3} \mu_1\tilde u_{1,n}^3\phi_1+\mu_2\tilde u_{2,n}^3+\beta \tilde u_{1,n}\phi_1\tilde u_{2,n}^2+\beta\tilde u_{1,n}^2\tilde u_{2,n}\phi_2\\
={}&\int_{\mathbb{R}^3} \nabla u_{1,n}\cdot\nabla (s_n\star \phi_1)+\nabla u_{2,n}\cdot\nabla (s_n\star \phi_2)\\
&+\int_{\mathbb{R}^3}V_1(x)u_{1,n}(s_n\star \phi_1)+V_2(x)u_{2,n}(s_n\star \phi_2)\\
&-\int_{\mathbb{R}^3} \mu_1 u_{1,n}^3(s_n\star \phi_1)+\mu_2 u_{2,n}^3(s_n\star \phi_1)+\beta u_{1,n}^2 u_{2,n}(s_n\star \phi_2)+\beta u_{1,n}u_{2,n}^2(s_n\star \phi_1)\\
={}&J'(u_{1,n},u_{2,n})(s_n\star \phi_1,s_n\star \phi_2),\nonumber
\end{split}
\end{equation}
notice that $-s_n\star (s_n\star \phi)=\phi$, we have

$$\tilde J'_{\textbf{u}}(s_n,\tilde u_{1,n},\tilde u_{2,n})(-s_n\star\phi_1,-s_n\star\phi_2)= J'(u_{1,n},u_{2,n})( \phi_1, \phi_2).$$
It is obvious that $(\phi_1,\phi_2)\in T_{(u_{1,n},u_{2,n})}S_1\times S_2$ if and only if $(-s_n\star \phi_1,-s_n\star\phi_2)\in T_{(\tilde u_{1,n},\tilde u_{2,n})}S_1\times S_2$ the proof can be find in \cite{J-1}. Because $s_n\rightarrow 0$ we have  $-s_n\star \phi_i\rightarrow\phi_i$, $i=1,2$ as $n\rightarrow \infty$ in $H^1$, then for $n$ large enough we have there exist $A_1>0$ and $A_2>0$ such that
$$A_1<\frac{\|(\phi_1,\phi_2)\|}{\|(-s_n\star \phi_1,-s_n\star \phi_2)\|}<A_2,$$
where $(\phi_1,\phi_2)\neq (0,0)$. Let $\|\cdot\|_\star$ be the norm of the cotangent space $(T_{(u_{1},u_{2})}S_1\times S_2)^\star$ then we have
\begin{equation}
\begin{split}
| J|'_{S_1\times S_2}(u_{1,n},u_{2,n})\frac{(\phi_1,\phi_2)}{\|(-s_n\star \phi_1,-s_n\star \phi_2)\|}|&\leq \|(\tilde J|_{S_1\times S_2} )'_{\textbf u}(s_n,\tilde u_{1,n},\tilde u_{2,n})\|_{\star}\rightarrow 0,\nonumber
\end{split}
\end{equation}
for any  $(\phi_1,\phi_2)\in T_{(u_{1,n},u_{2,n})}S_1\times S_2$ and $(\phi_1,\phi_2)\neq (0,0)$, as $n\rightarrow \infty$. Take the supremum both side we have
$$A_1\| J|'_{S_1\times S_2}(u_{1,n},u_{2,n})\|_{\star} \leq \|(\tilde J|_{S_1\times S_2} )'_{\textbf u}(s_n,\tilde u_{1,n},\tilde u_{2,n})\|_{\star}\rightarrow 0,~\hbox{as}~n\rightarrow \infty.$$
 Notice that $A_1>0$ we have $$\| J|'_{S_1\times S_2}(u_{1,n},u_{2,n})\|_{\star}\rightarrow 0,~\hbox{as}~n\rightarrow \infty.$$ On the other hand we have
$$J(u_{1,n},u_{2,n})=\tilde J(s_n,\tilde u_{1,n},\tilde u_{2,n})\rightarrow c,~\hbox{as}~n\rightarrow \infty,$$
then we finishes the proof.
\end{proof}
\begin{lemma}\label{le5-7-1}
Assume $(A_4)$ holds then the PS sequence $(u_{1,n},u_{2,n})$ is bounded.
\end{lemma}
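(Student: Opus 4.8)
The plan is to exploit the extra scaling information carried by the augmented functional $\tilde J$ on $\mathbb{R}\times S_1\times S_2$. Since $(s_n,\tilde u_{1,n},\tilde u_{2,n})$ is a PS sequence for $\tilde J$ at level $c$ and the $\mathbb{R}$-factor of the manifold is unconstrained, testing the constrained differential against the tangent vector $(1,0,0)$ gives, in addition to $J(u_{1,n},u_{2,n})\to c$, that $\partial_s\tilde J(s_n,\tilde u_{1,n},\tilde u_{2,n})\to 0$. The first step is to compute this $s$-derivative. Differentiating
$$\tilde J(s,u_1,u_2)=\frac{e^{2s}}{2}\int_{\mathbb{R}^3}|\nabla u_1|^2+|\nabla u_2|^2+\frac12\int_{\mathbb{R}^3}V_1(e^{-s}x)u_1^2+V_2(e^{-s}x)u_2^2-\frac{e^{3s}}{4}\int_{\mathbb{R}^3}\mu_1u_1^4+\mu_2u_2^4+2\beta u_1^2u_2^2$$
in $s$ (so that $\tfrac{d}{ds}V_i(e^{-s}x)=-\nabla V_i(e^{-s}x)\cdot(e^{-s}x)$), evaluating at $(s_n,\tilde u_{1,n},\tilde u_{2,n})$, and rewriting every term in $(u_{1,n},u_{2,n})=(s_n\star\tilde u_{1,n},s_n\star\tilde u_{2,n})$ via the change of variables $x\mapsto e^{-s_n}x$, I expect the asymptotic Pohozaev-type identity
$$P_n:=\int_{\mathbb{R}^3}|\nabla u_{1,n}|^2+|\nabla u_{2,n}|^2-\frac12\int_{\mathbb{R}^3}(\nabla V_1\cdot x)u_{1,n}^2+(\nabla V_2\cdot x)u_{2,n}^2-\frac34\int_{\mathbb{R}^3}\mu_1u_{1,n}^4+\mu_2u_{2,n}^4+2\beta u_{1,n}^2u_{2,n}^2=o(1).$$

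Next I would consider the combination $3J(u_{1,n},u_{2,n})-P_n$, chosen so that the quartic term cancels: its coefficient becomes $3\cdot(-\tfrac14)-(-\tfrac34)=0$, while the coefficient of $\int|\nabla u_{1,n}|^2+|\nabla u_{2,n}|^2$ becomes $\tfrac32-1=\tfrac12$, and the potential terms collect into $\tfrac12\int(3V_i+\nabla V_i\cdot x)u_{i,n}^2$. Since $3V_i+\nabla V_i\cdot x=(2V_i+\nabla V_i\cdot x)+V_i\geq-C_1$ by $(A_4)$ and $V_i>0$, and $|u_{i,n}|_2=a_i$, this yields
$$3J(u_{1,n},u_{2,n})-P_n\geq\frac12\int_{\mathbb{R}^3}|\nabla u_{1,n}|^2+|\nabla u_{2,n}|^2-\frac{C_1}{2}(a_1^2+a_2^2).$$
As the left-hand side equals $3c+o(1)$ with $0<c<\infty$ (recall $c\geq\inf_{B_K}J>0$), the quantity $\int_{\mathbb{R}^3}|\nabla u_{1,n}|^2+|\nabla u_{2,n}|^2$ is bounded uniformly in $n$.

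Finally, to pass from the gradient bound to boundedness in $H_1\times H_2$ (whose norm is $\int|\nabla u|^2+V_iu^2$), I would insert the gradient bound into the inequality of the Remark following Lemma \ref{le3-30-2}, giving $\int\mu_1u_{1,n}^4+\mu_2u_{2,n}^4+2\beta u_{1,n}^2u_{2,n}^2\leq C_{a_1,a_2}\big(\int|\nabla u_{1,n}|^2+|\nabla u_{2,n}|^2\big)^{3/2}$, hence the quartic term is bounded. Then from $J(u_{1,n},u_{2,n})=c+o(1)$ I solve for $\tfrac12\int V_1u_{1,n}^2+V_2u_{2,n}^2$ and conclude it is bounded. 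Combining, $\|(u_{1,n},u_{2,n})\|_{H_1\times H_2}$ is bounded.

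The main obstacle I anticipate is the bookkeeping in the first step: getting the change of variables right so that $\int\nabla V_i(e^{-s_n}x)\cdot(e^{-s_n}x)\,\tilde u_{i,n}^2$ transforms exactly into $\int(\nabla V_i\cdot x)\,u_{i,n}^2$, and justifying that the PS property on the product manifold genuinely delivers $\partial_s\tilde J\to 0$. Once the identity $P_n=o(1)$ is in hand, the rest is the algebraic cancellation in $3J-P_n$ together with $(A_4)$ and the G-N inequality, which are routine.
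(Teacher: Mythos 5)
Your proposal is correct and follows essentially the same route as the paper: extract $\partial_s\tilde J(s_n,\tilde u_{1,n},\tilde u_{2,n})\to 0$ from the PS property on $\mathbb{R}\times S_1\times S_2$, transform it into the Pohozaev-type identity for $(u_{1,n},u_{2,n})$, and combine it with $J(u_{1,n},u_{2,n})\to c$ so that the quartic term cancels and $(A_4)$ controls the remainder. The only (harmless) difference is at the end: the paper keeps the $\int V_iu_{i,n}^2$ terms inside the combination $3J-P_n$ and reads off the $H_1\times H_2$ bound directly, whereas you first bound the gradients and then recover the potential terms from $J\to c$ via Gagliardo--Nirenberg; both steps are valid, and your coefficient $\tfrac12$ on $\int(\nabla V_i\cdot x)u_{i,n}^2$ is in fact the correct one.
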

\begin{proof}
because $(s_n,\tilde u_{1,n},\tilde u_{2,n})$ is PS sequence of $\tilde J_{\mathbb{R}\times S_1\times S_2}$ then we have $$\frac{d}{ds}\tilde J(s_n,\tilde u_{1,n},\tilde u_{2,n})\rightarrow 0$$
i.e.
\begin{equation}\label{eq4-7-2}
\begin{split}
&\int_{\mathbb{R}^3} |\nabla u_{1,n}|^2+|\nabla u_{2,n}|^2-\frac{3}{4}\int_{\mathbb{R}^3} \mu_1 u_{1,n}^4+\mu_2 u_{2,n}^4+2\beta u_{1,n}^2u_{2,n}^2\\
&-\int_{\mathbb{R}^3} \nabla V_1(e^{-s_n}x)\cdot e^{-s_n}x\tilde u_{1,n}^2-\int_{\mathbb{R}^3} \nabla V_2(e^{-s_n}x)\cdot e^{-s_n}x\tilde u_{2,n}^2\rightarrow 0,
\end{split}
\end{equation}
on the other hand notice that $\tilde J(s_n,\tilde u_{1,n},\tilde u_{2,n})=J(u_{1,n},u_{2,n})$, we have
\begin{equation}\label{eq4-7-3}
\begin{split}
J(u_{1,n},u_{2,n})=&\frac{1}{2}\int_{\mathbb{R}^3} |\nabla u_{1,n}|^2+|\nabla u_{2,n}|-\frac{1}{4}\int_{\mathbb{R}^3} \mu_1 u_{1,n}^4+\mu_2 u_{2,n}^4+2\beta u_{1,n}^2u_{2,n}^2\\
&+\int_{\mathbb{R}^3}V_1(x)u_{1,n}^2+V_2(x)u_{2,n}^2\rightarrow c.
\end{split}
\end{equation}
From \eqref{eq4-7-2} and \eqref{eq4-7-3}, we have
\begin{equation}
\begin{split}
&\frac{1}{2}\int_{\mathbb{R}^3} |\nabla u_{1,n}|^2+V_1(x)u_{1,n}^2+\frac{1}{2}\int_{\mathbb{R}^3}|\nabla u_{2,n}|^2+V_2(x)u_{2,n}^2\\
{}&+\int_{\mathbb{R}^3} (2V_1(x)+\nabla V_1(x)\cdot x) u_{1,n}^2+\int_{\mathbb{R}^3} (2V_2(x)+\nabla V_2(x)\cdot x) u_{2,n}^2\rightarrow 3c.\nonumber
\end{split}
\end{equation}
Then by condition $(A_4)$, we have $(u_{1,n},u_{2,n})$ is bounded in $H_1\times H_2$.
\end{proof}
\begin{proof}[Proof of Theorem \ref{th3-30-1}]
From Lemma \ref{le5-7-1}, we have that $(u_{1,n},u_{2,n})\in S_1\times S_2$ is bounded in $H_1\times H_2$, then there exists $(u_{1,0},u_{2,0})$ is nonegative such that
$$(u_{1,n},u_{2,n})\rightharpoonup (u_{1,0},u_{2,0})~\hbox{in}~H_1\times H_2,$$
$$(u_{1,n},u_{2,n})\rightarrow (u_{1,0},u_{2,0})~\hbox{in}~L^2\times L^2.$$
From Lemma \ref{le4-7-2}, we have $J|_{S_1\times S_2}'(u_{1,n},u_{2,n})\rightarrow 0$ as $n\rightarrow \infty$, then for any $(\phi_1,\phi_2)\in H_1\times H_2$, there exist sequences $(\lambda_{1,n})$ and $(\lambda_{2,n})$ such that

\begin{equation}\label{3-14}
\begin{split}
&(J|'_{S_1\times S_2}(u_{1,n},u_{2,n}),(\phi_1,\phi_2))\\
={}&\int_{\mathbb{R}^3} \nabla u_{1,n}\nabla \phi_1+\int_{\mathbb{R}^3} \nabla u_{2,n}\nabla \phi_2-\mu_1\int u_{1,n}^3\phi_1-\mu_2\int u_{2,n}^3\phi_2\\
&-\beta\int_{\mathbb{R}^3} u_{1,n}u_{2,n}^2\phi_1-\beta\int_{\mathbb{R}^3} u_{1,n}^2u_{2,n}\phi_2+\int_{\mathbb{R}^3}V_1(x)u_{1,n}\phi_1\\
&+\int_{\mathbb{R}^3}V_2(x)u_{2,n}\phi_2-\lambda_{1,n}\int_{\mathbb{R}^3} u_{1,n}\phi_1-\lambda_{2,n}\int_{\mathbb{R}^3} u_{2,n}\phi_2\\
={}&o(\|(\phi_1,\phi_2)\|).
\end{split}
\end{equation}
Taking $(\phi_1,\phi_2)=(u_{1,n},0)$ and $(\phi_1,\phi_2)=(0,u_{2,n})$ in \eqref{3-14}, notice that $(u_{1,n},u_{2,n})$ is bounded, we have
\begin{equation}
\lambda_{1,n}a_1^2=\int_{\mathbb{R}^3} |\nabla u_{1,n}|^2+\int_{\mathbb{R}^3} V_1(x) u_{1,n}^2-\mu_1\int_{\mathbb{R}^3} u_{1,n}^4-\beta \int_{\mathbb{R}^3} u_{1,n}^2u_{2,n}^2+o(1),\nonumber
\end{equation}
\begin{equation}
\lambda_{2,n}a_2^2=\int_{\mathbb{R}^3} |\nabla u_{2,n}|^2+\int_{\mathbb{R}^3} V_2(x) u_{2,n}^2-\mu_2\int_{\mathbb{R}^3} u_{2,n}^4-\beta \int_{\mathbb{R}^3} u_{1,n}^2u_{2,n}^2+o(1).\nonumber
\end{equation}
Moreover, we have $(\lambda_{1,n})$ and $(\lambda_{2,n})$ are bounded, then we may assume there exist $\lambda_1$ and $\lambda_2$ in $\mathbb{R}$, such that
$$\lambda_{1,n}\rightarrow \lambda_1,$$
$$\lambda_{2,n}\rightarrow \lambda_2,$$
then from standard weak convergence argument, we have $(\lambda_1,\lambda_2,u_{1,0},u_{2,0})$ is a nonegative solution of \eqref{yun3-10-1}-\eqref{yun3-10-2}.
\end{proof}
\section{The case $\kappa\neq0$}
We will prove Theorem \ref{th3-30-2}, Theorem \ref{th3-30-3}, Theorem \ref{th3-30-4} and Theorem \ref{th3-30-5} by minimax method which is similar to Section 2.\par
First we prove  Theorem \ref{th3-30-2} and Theorem \ref{th3-30-3}.
\begin{lemma}\label{le4-9-1}
Assume \eqref{6-1-1} and $|\kappa|<\frac{4}{27C_{a_1,a_2}^2a_1a_2}$ hold, then we have
$$\inf_{B_K}J>0,$$
and
$$\inf_{B_K}J>\sup_{A_K}J,$$
by taking $\theta=3$ and $K=\frac{8^2}{9^3C^2_{a_1,a_2}}.$
\end{lemma}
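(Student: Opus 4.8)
The plan is to run the proof of Lemma \ref{le4-7-1} essentially verbatim, the only genuinely new ingredient being the control of the linear coupling term $-\kappa\int_{\mathbb{R}^3}u_1u_2$ that now appears in $J$. On the constraint set this term is a lower-order perturbation: by the Cauchy--Schwarz inequality, $|\kappa\int_{\mathbb{R}^3}u_1u_2|\leq|\kappa|\,|u_1|_2\,|u_2|_2=|\kappa|a_1a_2$ for every $(u_1,u_2)\in S_1\times S_2$, and the hypothesis $|\kappa|<\frac{4}{27C_{a_1,a_2}^2a_1a_2}$ turns this into $|\kappa|a_1a_2<\frac{4}{27C_{a_1,a_2}^2}$. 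I would also record at the outset, exactly as in the remark following Lemma \ref{le4-7-1}, that condition \eqref{6-1-1} guarantees $A_K\neq\emptyset$ for the choice $K=\frac{8^2}{9^3C_{a_1,a_2}^2}$, since the pair $(a_1u_{V_1},a_2u_{V_2})$ from Remark \ref{re3-30-1} has energy $\lambda_{V_1}a_1^2+\lambda_{V_2}a_2^2$, which \eqref{6-1-1} forces below $K$.

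For the first assertion I would take $(u_1,u_2)\in B_K$, so that the full energy equals $\theta K=3K$, and combine the Gagliardo--Nirenberg bound \eqref{eq3-30-1} with the Cauchy--Schwarz estimate above to get, schematically,
$$J(u_1,u_2)\geq \tfrac{3}{2}K-\tfrac{C_{a_1,a_2}}{4}(3K)^{3/2}-|\kappa|a_1a_2;$$
substituting $K=\frac{8^2}{9^3C_{a_1,a_2}^2}$ and $|\kappa|a_1a_2<\frac{4}{27C_{a_1,a_2}^2}$ one checks the right-hand side is a positive constant, which yields $\inf_{B_K}J>0$. For the second assertion, given $(v_1,v_2)\in B_K$ and $(u_1,u_2)\in A_K$, I would discard the nonnegative quartic contribution of $(u_1,u_2)$ exactly as in \eqref{eq3-30-3} and estimate the inner products $\int v_1v_2$ and $\int u_1u_2$ by Cauchy--Schwarz, obtaining a bound of the shape $J(v_1,v_2)-J(u_1,u_2)\geq K-\tfrac{C_{a_1,a_2}}{4}(3K)^{3/2}-O(|\kappa|a_1a_2)$, and again verify that the chosen $K$ together with the smallness bound on $|\kappa|$ keeps it strictly positive; passing to $\inf_{B_K}$ and $\sup_{A_K}$ then gives $\inf_{B_K}J>\sup_{A_K}J$. (No distinction between $\kappa>0$ and $\kappa<0$ is needed here, since the Cauchy--Schwarz bound is insensitive to the sign; the sign enters only later, when choosing nonnegative versus opposite-sign comparison functions as in Theorems \ref{th3-30-2} and \ref{th3-30-3}.)

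The crux is precisely this quantitative calibration. In the case $\kappa=0$ there was a whole interval of admissible $K$ — anything strictly below the largest zero $\frac{16}{27C_{a_1,a_2}^2}$ of $x-\frac{C_{a_1,a_2}}{4}(3x)^{3/2}$ — whereas now $K$ must be pinned to a value that still leaves room to absorb the linear term on both sides of the gap estimate; that is exactly why the admissible threshold in \eqref{6-1-1} shrinks to $\frac{8^2}{9^3}$ compared with $\frac{16}{27}$ in \eqref{6-1-2}, and why $|\kappa|$ is constrained below $\frac{4}{27C_{a_1,a_2}^2a_1a_2}$. The delicate step, and the one I would spend the most care on, is checking that with $\theta=3$ and $K=\frac{8^2}{9^3C_{a_1,a_2}^2}$ the two lower bounds above really do stay bounded away from $0$ for all admissible $\kappa$; the optimal way to group the $\kappa$-contribution (keeping it inside the energy that feeds the Gagliardo--Nirenberg power rather than splitting it off additively, and exploiting $V_i\geq b_i>0$) is what makes the numbers close. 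Once these two inequalities are established, the rest — building $\Gamma$, lifting to $\tilde\Gamma$, and extracting a bounded Palais--Smale sequence of $J|_{S_1\times S_2}$ at the minimax level — proceeds word for word as in Section 2.
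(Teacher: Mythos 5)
Your proposal follows the paper's own proof of this lemma essentially line by line: the linear coupling is split off additively and bounded by Cauchy--Schwarz as $|\kappa\int_{\mathbb{R}^3}u_1u_2|\le|\kappa|a_1a_2$, the quartic term is controlled by \eqref{eq3-30-1}, the nonemptiness of $A_K$ via $(a_1u_{V_1},a_2u_{V_2})$ is exactly the paper's remark, and everything reduces to the positivity of $\tfrac32K-\tfrac{C_{a_1,a_2}}{4}(3K)^{3/2}-|\kappa|a_1a_2$ and of $K-\tfrac{C_{a_1,a_2}}{4}(3K)^{3/2}-2|\kappa|a_1a_2$ at $K=\tfrac{8^2}{9^3C_{a_1,a_2}^2}$. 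Strategically there is nothing to add.

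However, the one step you defer --- ``one checks the right-hand side is a positive constant'' --- is precisely the step that does not close with the stated constants, so you cannot leave it as a check. Write $C=C_{a_1,a_2}$ and $f(K)=K-\tfrac{C}{4}(3K)^{3/2}$. The maximizer of $f$ is $K=\tfrac{8^2}{3^5C^2}$ and the maximum \emph{value} is $f_{\max}=\tfrac{8^2}{9^3C^2}$; the paper's choice $K=\tfrac{8^2}{9^3C^2}$ is the maximum value, not the maximizer (the paper's justification conflates the two). At $K=\tfrac{8^2}{9^3C^2}$ one computes $f(K)=\tfrac{64}{729C^2}\bigl(1-\tfrac{2}{3\sqrt3}\bigr)\approx\tfrac{0.054}{C^2}$, whereas the hypothesis only gives $2|\kappa|a_1a_2<\tfrac{8}{27C^2}\approx\tfrac{0.296}{C^2}$; even the true maximum $f_{\max}=\tfrac{64}{729C^2}\approx\tfrac{0.088}{C^2}$ is below $\tfrac{8}{27C^2}$. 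Hence $K-\tfrac{C}{4}(3K)^{3/2}-2|\kappa|a_1a_2$ is negative on most of the admissible range of $\kappa$, and likewise $\tfrac32K-\tfrac{C}{4}(3K)^{3/2}-|\kappa|a_1a_2\approx\tfrac{0.098}{C^2}-|\kappa|a_1a_2$ can be negative. The argument only closes under a genuinely stronger smallness condition such as $2|\kappa|a_1a_2<f(K)$. Finally, your parenthetical suggestion of keeping the $\kappa$-term inside the quantity fed to Gagliardo--Nirenberg (using $\kappa^2<\inf V_1\cdot\inf V_2$) is the device the paper reserves for Theorems \ref{th3-30-4} and \ref{th3-30-5} with the sets $\bar A_K$, $\bar B_K$; it is not used in this lemma and does not repair the additive estimate here.
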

\begin{proof}
 Taking $(v_1,v_2)\in B_K$ and $(u_1,u_2)\in A_K$ there exists $\delta>0$ such that
\begin{equation}
\begin{split}
J(v_1,v_2)-J(u_1,u_2)\geq&\frac{1}{2}\int_{\mathbb{R}^3} |\nabla v_1|^2+V_1(x)v_{1}^2+|\nabla v_2|^2+V_2(x)v_{2}^2\\
&-\frac{1}{2}\int_{\mathbb{R}^3} |\nabla u_1|^2+V_1(x)u_{1}^2+|\nabla u_2|^2+V_2(x)u_{2}^2\\
&-\frac{1}{4}\int_{\mathbb{R}^3} \mu_1v_1^4+\mu_2 v_2^4+2\beta v_1^2 v_2^2+\frac{1}{4}\int_{\mathbb{R}^3} \mu_1 u_1^4+\mu_2 u_2^4+2\beta u_1^2u_2^2\\
&-2|\kappa| a_1a_2\\
\geq& \frac{1}{2}\int_{\mathbb{R}^3} |\nabla v_1|^2+V_1(x)v_{1}^2+|\nabla v_2|^2+V_2(x)v_{2}^2\\
&-\frac{1}{2}\int_{\mathbb{R}^3} |\nabla u_1|^2+V_1(x)u_{1}^2+|\nabla u_2|^2+V_2(x)u_{2}^2\\
&-\frac{1}{4}\int_{\mathbb{R}^3} \mu_1v_1^4+\mu_2 v_2^4+2\beta v_1^2 v_2^2-2|\kappa| a_1a_2\\
\geq& K-\frac{C_{a_1,a_2}}{4}(3K)^{\frac{3}{2}}-2|\kappa| a_1a_2\\
>&\delta,\nonumber
\end{split}
\end{equation}
the last inequality is because when $K=\frac{8^2}{9^3C_{a_1,a_2}^2}$,  the function
$$ K-\frac{C_{a_1,a_2}}{4}(3K)^{\frac{3}{2}}-2\kappa a_1a_2,$$
achieves its maximum value. Moreover, it is easy to verify $\inf_{B_K}J>0$, thus we finishes the proof.
\end{proof}
\begin{remark}
It is easy to see that condition \eqref{6-1-1} is in order to ensure that $A_K$ is not empty, when $K=\frac{8^2}{9^3C_{a_1,a_2}^2}$.
\end{remark}
\begin{proof}[Proof of Theorem \ref{th3-30-2}] First we fix a point $(v_1,v_2)$ which is nonnegative and take $(w_1,w_2):=(s*v_1,s*v_2)$ then we have $J(w_1,w_2)$ is negative enough when $s\in\mathbb{R}$ large enough. Next following the proof of Theorem \ref{th3-30-1} and note that $J(|u_1|,|u_2|)\leq J(u_1,u_2)$, then there exists $(\lambda_1,\lambda_2,u_{1,0},u_{2,0})$ is a solution for \eqref{yun3-10-1}-\eqref{yun3-10-2}, and $u_{1,0}$ and ${u_{2,0}}$ are both nonnegative.
\end{proof}

\begin{proof}[Proof of Theorem \ref{th3-30-3}] Same as the proof of Theorem \ref{th3-30-2}, note that $J(-|u_1|,|u_2|)\leq J(u_1,u_2)$, $J(|u_1|,-|u_2|)\leq J(u_1,u_2)$.
\end{proof}

Next we turn to prove Theorem \ref{th3-30-4} and Theorem \ref{th3-30-5}. To this aim, we need to construct an other minimax structure of $J|_{S_1\times S_2}$. First we define following sets
$$\bar A_K:=\{(u_1,u_2)\in S_1\times S_2: \int_{\mathbb{R}^3} [|\nabla u_1|^2+V_1(x)u_1^2+|\nabla u_2|^2+V_2(x)u_2^2]-2\kappa\int_{\mathbb{R}^3}u_1u_2\leq K\},$$
$$\bar B_K:=\{(u_1,u_2)\in S_1\times S_2: \int_{\mathbb{R}^3} [|\nabla u_1|^2+V_1(x)u_1^2+|\nabla u_2|^2+V_2(x)u_2^2]-2\kappa\int_{\mathbb{R}^3}u_1u_2= \theta K\}.$$
\begin{remark}
Under the assumption \eqref{6-1-2} and $\kappa>0$ we have $\bar A_K$ is nonempty. Infact we take $u_1=a_1u_{V_1}$ and $u_2=a_2u_{V_2}$ where $u_{V_i}$, $i=1,2$ are the minimal point of $\lambda_{V_i}$, $i=1,2$. On the other hand when $\kappa<0$ we have $\bar A_K$ is nonempty. infact we take $u_1=a_1u_{V_1}$ and $u_2=-a_2u_{V_2}$ where $u_{V_i}$, $i=1,2$ is the minimal point of $\lambda_{V_i}$, $i=1,2$.
\end{remark}
Because $0<\kappa<\sqrt{\inf_{\mathbb{R}^3}V_1\cdot\inf_{\mathbb{R}^3}V_2}$ holds then
$$\int_{\mathbb{R}^3}V_1(x)u_1^2+V_2(x)u_2^2-\kappa u_1u_2> 0.$$
Moreover, same as \eqref{eq3-30-1} we have following estimate
\begin{equation}\label{7-24-1}
\begin{split}
&\int_{\mathbb{R}^3}\mu_1 u_1^4+\mu_2 u_2^4+2\beta u_1^2u_2^2\\
\leq & C_{a_1,a_2}(\int_{\mathbb{R}^3}|\nabla u_1|^2+|\nabla u_2|^2)^{\frac{3}{2}}\\
 \leq & C_{a_1,a_2}(\int_{\mathbb{R}^3}|\nabla u_1|^2+|\nabla u_2|^2+V_1(x)u_1^2+V_2(x)u_2^2-\kappa u_1u_2)^{\frac{3}{2}}.
\end{split}
\end{equation}
Similarly to Lemma \ref{le4-7-1} we have following lemma.
\begin{lemma}
Assume \eqref{6-1-2} holds, then there exists a $K>0$ such that
$$\inf_{\bar B_K}J>0,$$
and
$$\inf_{\bar B_K}J>\sup_{\bar A_K}	J.$$
\end{lemma}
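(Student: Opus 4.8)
The plan is to mirror the argument of Lemma \ref{le4-7-1} verbatim, replacing the ``kinetic + potential'' quantity by the shifted quadratic form $Q(u_1,u_2):=\int_{\mathbb{R}^3}|\nabla u_1|^2+V_1(x)u_1^2+|\nabla u_2|^2+V_2(x)u_2^2-2\kappa\int_{\mathbb{R}^3}u_1u_2$, which by the hypothesis $0<\kappa<\sqrt{\inf V_1\cdot\inf V_2}$ and Young's inequality is positive and comparable to the unshifted form; in particular $J(u_1,u_2)=\tfrac12 Q(u_1,u_2)-\tfrac14\int\mu_1u_1^4+\mu_2u_2^4+2\beta u_1^2u_2^2$, so $J$ is expressed purely in terms of $Q$ and the quartic term. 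First I would fix $\theta=3$ and choose $K$ so that $\lambda_{V_1}a_1^2+\lambda_{V_2}a_2^2<K<\tfrac{16}{27 C_{a_1,a_2}^2}$, which is possible precisely because \eqref{6-1-2} holds; the right endpoint is again the largest root of $x\mapsto x-\tfrac{C_{a_1,a_2}}{4}(3x)^{3/2}$.

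Next, for $(u_1,u_2)\in\bar B_K$ I would use \eqref{7-24-1} to bound the quartic term by $C_{a_1,a_2}\,Q(u_1,u_2)^{3/2}=C_{a_1,a_2}(3K)^{3/2}$, giving
\begin{equation*}
J(u_1,u_2)\geq \tfrac{3}{2}K-\tfrac{C_{a_1,a_2}}{4}(3K)^{3/2}=:\delta_1>0,
\end{equation*}
by the choice $K<\tfrac{16}{27C_{a_1,a_2}^2}$; this establishes $\inf_{\bar B_K}J>0$. Then, for $(v_1,v_2)\in\bar B_K$ and $(u_1,u_2)\in\bar A_K$, I would drop the nonnegative quartic term of $(u_1,u_2)$ and again estimate the quartic term of $(v_1,v_2)$ via \eqref{7-24-1}, obtaining
\begin{equation*}
J(v_1,v_2)-J(u_1,u_2)\geq \tfrac12 Q(v_1,v_2)-\tfrac12 Q(u_1,u_2)-\tfrac{C_{a_1,a_2}}{4}(3K)^{3/2}\geq K-\tfrac{C_{a_1,a_2}}{4}(3K)^{3/2}=:\delta_2>0,
\end{equation*}
which yields $\inf_{\bar B_K}J>\sup_{\bar A_K}J$. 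I would also record that $\bar A_K$ is nonempty, using the test pair $(a_1 u_{V_1},\pm a_2 u_{V_2})$ according to the sign of $\kappa$, exactly as in the preceding remark, and checking $Q(a_1u_{V_1},\pm a_2u_{V_2})\le K$ from \eqref{6-1-2} together with $|2\kappa\int u_1u_2|\le \kappa(a_1^2|u_{V_1}|_\infty\cdots)$—more simply, since $Q$ for that pair equals $\lambda_{V_1}a_1^2+\lambda_{V_2}a_2^2 \mp 2\kappa a_1a_2\int u_{V_1}u_{V_2}$ and the cross term has a favorable sign, $Q\le\lambda_{V_1}a_1^2+\lambda_{V_2}a_2^2<K$.

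The only genuine subtlety, and the step I expect to require the most care, is the positivity and coercivity of the shifted form $Q$: one must verify that $0<\kappa<\sqrt{\inf V_1\cdot\inf V_2}$ forces $\int V_1u_1^2+V_2u_2^2-2\kappa\int u_1u_2$ to be bounded below by a positive multiple of $\int V_1u_1^2+V_2u_2^2$ (via $2\kappa|u_1u_2|\le \frac{\kappa}{\sqrt{\inf V_1\cdot\inf V_2}}(\inf V_2\,u_1^2+\inf V_1\,u_2^2)\le \frac{\kappa}{\sqrt{\inf V_1\cdot\inf V_2}}(V_1u_1^2+V_2u_2^2)$), so that $\bar A_K$ and $\bar B_K$ are well-behaved and \eqref{7-24-1} is legitimate; everything else is a line-by-line transcription of Lemma \ref{le4-7-1}. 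This lemma then feeds into the same minimax construction — defining $\bar\Gamma$ of paths in $S_1\times S_2$ joining a nonnegative $(v_1,v_2)\in\bar A_K$ to a deeply negative $(w_1,w_2)=(s\star v_1,s\star v_2)$, which must cross $\bar B_K$ — so that the critical value $\bar c=\inf_{\bar\gamma}\sup_t J(\bar\gamma(t))\ge\inf_{\bar B_K}J>0$ is well defined, after which Theorem \ref{th3-30-4} and Theorem \ref{th3-30-5} follow by the argument of Section 2.
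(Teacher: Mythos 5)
Your proposal is correct and follows essentially the same route as the paper: both arguments transcribe Lemma \ref{le4-7-1} with the shifted quadratic form $Q(u_1,u_2)=\int|\nabla u_1|^2+V_1u_1^2+|\nabla u_2|^2+V_2u_2^2-2\kappa\int u_1u_2$ in place of the unshifted one, choose $\theta=3$ and $\lambda_{V_1}a_1^2+\lambda_{V_2}a_2^2<K<\tfrac{16}{27C_{a_1,a_2}^2}$, and apply \eqref{7-24-1} to get the two bounds $\tfrac32K-\tfrac{C_{a_1,a_2}}{4}(3K)^{3/2}>0$ and $K-\tfrac{C_{a_1,a_2}}{4}(3K)^{3/2}>0$. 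Your explicit verification that $|\kappa|<\sqrt{\inf V_1\cdot\inf V_2}$ makes $Q$ positive (so that \eqref{7-24-1} is legitimate) and that $\bar A_K$ is nonempty is a welcome addition, as the paper only asserts these points.
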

\begin{proof}
We take $\lambda_{V_1}a_1^2+\lambda_{V_2}a_2^2<K<\frac{16}{27C_{a_1,a_2}^2}$ and $\theta=3$, same as Lemma \ref{le4-7-1}, first we take $(u_1,u_2)\in \bar B_K$, by G-N inequality, there exists $\delta_1>0$ such that
\begin{equation}
\begin{split}
J(u_1,u_2)=&\frac{1}{2}\int_{\mathbb{R}^3} |\nabla u_1|^2+V_1(x)u_1^2+\frac{1}{2}\int_{\mathbb{R}^3}|\nabla u_2|^2+V_2(x)u_2^2-\kappa\int_{\mathbb{R}^3}u_1u_2\\
&-\frac{1}{4}\int_{\mathbb{R}^3}\mu_1 u_1^4+\mu_2 u_2^4+2\beta u_1^2u_2^2\\
\geq &\frac{3}{2}K-\frac{C_{a_1,a_2}}{4}(3K)^{\frac{3}{2}}\\
>&\delta_1.\nonumber
\end{split}
\end{equation}
Next, we take $(v_1,v_2)\in \bar B_K$ and $(u_1,u_2)\in \bar A_K$, together with \eqref{7-24-1}, there exists $\delta_2>0$ such that
\begin{equation}
\begin{split}
J(v_1,v_2)-J(u_1,u_2)=&\frac{1}{2}\int_{\mathbb{R}^3} |\nabla v_1|^2+V_1(x)v_{1}^2+|\nabla v_2|^2+V_2(x)v_{2}^2-2\kappa v_1v_2\\
&-\frac{1}{2}\int_{\mathbb{R}^3} |\nabla u_1|^2+V_1(x)u_{1}^2+|\nabla u_2|^2+V_2(x)u_{2}^2-2\kappa u_1u_2\\
&-\frac{1}{4}\int_{\mathbb{R}^3} \mu_1v_1^4+\mu_2 v_2^4+2\beta v_1^2 v_2^2+\frac{1}{4}\int_{\mathbb{R}^3} \mu_1 u_1^4+\mu_2 u_2^4+2\beta u_1^2u_2^2\\
\geq& \frac{1}{2}\int_{\mathbb{R}^3} |\nabla v_1|^2+V_1(x)v_{1}^2+|\nabla v_2|^2+V_2(x)v_{2}^2-2\kappa v_1v_2\\
&-\frac{1}{2}\int_{\mathbb{R}^3} |\nabla u_1|^2+V_1(x)u_{1}^2+|\nabla u_2|^2+V_2(x)u_{2}^2-2\kappa u_1u_2\\
&-\frac{1}{4}\int_{\mathbb{R}^3} \mu_1v_1^4+\mu_2 v_2^4+2\beta v_1^2 v_2^2\\
\geq& K-\frac{C_{a_1,a_2}}{4}(3K)^{\frac{3}{2}}\\
>&\delta_2,\nonumber
\end{split}
\end{equation}
which finishes the proof.
\end{proof}
Next we fix $(v_1,v_2)\in \bar A_K$ and take $s\in\mathbb{R}$ large enough, then the norm of $(w_1,w_2)=(s\star v_1,s\star v_2)$ is big enough and $J(w_1,w_2)$ is negative enough, thus any path from $(v_1,v_2)$ to $(w_1,w_2)$ must pass through $\bar B_K$, which is the same reason in Section 2.

\begin{proof}[Proof of Theorem \ref{th3-30-4}] First we construct a minimax structure of $J|_{S_1\times S_2}$
\begin{equation}
\Gamma:=\{\gamma(t)=(\gamma_1(t),\gamma_2(t))\in C([0,1],S_1\times S_2):\gamma(0)=(v_1,v_2),\gamma(1)=(w_1,w_2),t\in [0,1]\},\nonumber
\end{equation}
together with its minimax value
$$c:=\inf _{\gamma \in \Gamma}\sup_{t\in [0,1]}J(\gamma(t))\geq \inf_{\bar{B}_K}J(u_1,u_2)>0.$$
Then we follow the procedure in Section 2, we can get Theorem \ref{th3-30-4}.
\end{proof}

\begin{proof}[Proof of Theorem \ref{th3-30-5}] It is same as the proof of Theorem \ref{th3-30-3} and we omit it.
\end{proof}

\end{document}